
\documentclass[a4paper,12pt]{amsart}

\usepackage{latexsym,amsmath,amsfonts,amscd,amssymb,amsthm,mathrsfs}
\usepackage{tikz}
\usepackage{caption}
\usepackage{enumerate}
\usepackage[all]{xy}
\usepackage{color}
\usepackage{soul}
\usepackage{pdflscape}
\usepackage{booktabs}
\usepackage{longtable}
\usepackage{footmisc}
\usepackage{multirow}
\usepackage{comment}
\usepackage{fullpage}
\usepackage{float}

\usepackage{ cleveref}

\def\z{\mathfrak{z}}
\def\u{\mathfrak{u}}

\def\g{\mathfrak{g}}
\def\h{\mathfrak{h}}
\def\n{\mathfrak{n}}

\def\R{\mathbb{R}}

\def\Z{\mathbb{Z}}
\def\N{\mathbb{N}}

\def\ad{\operatorname{ad}}
\def\tr{\operatorname{tr}}
\def\I{\operatorname{Id}}
\def\alt{\raise1pt\hbox{$\bigwedge$}}
\def\pint{\langle \cdotp,\cdotp \rangle }

\theoremstyle{plain}
\newtheorem{thm}{\bf Theorem}[section]
\newtheorem*{thm*}{\bf Theorem}
\newtheorem{cor}[thm]{\bf Corollary}
\newtheorem{prop}[thm]{\bf Proposition}

\theoremstyle{definition}

\newtheorem{ejemplo}[thm]{\bf Example}

\theoremstyle{remark}
\newtheorem{obs}[thm]{\bf Remark}

\title{Locally conformal symplectic structures on Lie algebras of type I and their solvmanifolds}

\author{M. Origlia}
\email{origlia@famaf.unc.edu.ar}

\date{}
\address{KU Leuven Kulak, E. Sabbelaan 53, BE-8500 Kortrijk, Belgium and \\
FaMAF-UNC, CIEM-CONICET, Ciudad Universitaria, 5000 C\'{o}rdoba, Argentina}
\thanks{}

\subjclass[2010]{22E25, 53C15, 53D05, 53C55, 22E40}
\keywords{Locally conformal symplectic structure, Lie algebras of type I, , locally conformal K\"ahler metric, Vaisman metric, lattice, solvmanifold}

\begin{document}

\begin{abstract}
We study Lie algebras of type I, that is, a Lie algebra $\g$ where all the eigenvalues of the operator $\ad_X$ are imaginary for all $X\in\g$. We prove that the Morse-Novikov cohomology of a Lie algebra of type I is trivial for any closed  $1$-form. We focus on  locally conformal symplectic structures (LCS) on Lie algebras of type I. In particular we show that for a Lie algebra of type I any LCS structure is of the first kind. We also exhibit lattices for some $6$-dimensional Lie groups of type I admitting left invariant LCS structures in order to produce compact solvmanifolds equipped with an invariant LCS structure.
\end{abstract}
\maketitle

\section{Introduction}
The most important class of Hermitian manifolds is definitely the class of K\"ahler manifolds. Given a smooth manifold, there are topological obstructions to the existence of a K\"ahler metric and it is known that many important manifolds cannot admit K\"ahler metrics. 
A bigger class of Hermitian manifolds is given by the locally conformal K\"ahler (LCK) manifolds, which have shown to be of great importance lately.
These are Hermitian manifolds such that each point has a neighborhood where the metric is conformal to a K\"ahler metric. Equivalently, a Hermitian manifold $(M,J,g)$ is LCK if and only if there exists a closed $1$-form $\theta$ such that $d\omega=\theta\wedge\omega$
where $\omega$ is the fundamental 2-form. In this case, the $1$-form $\theta$ is called the Lee form. LCK manifolds were introduced by I. Vaisman in \cite{V} and deeply studied by many others authors since then.

Now, if we only consider the equation $d\omega=\theta\wedge\omega$ for a non degenerate $2$-form $\omega$ and a closed $1$-form $\theta$ and do not assume the existence of a Hermitian structure, we arrive at the notion of a locally conformal symplectic (LCS) manifold.  These manifolds were considered by Lee in \cite{L} and they have been firstly studied by Vaisman in \cite{V}. Some recent results can be found in \cite{Ba,Ha,HR,HR2,LV}, among others. 
This topic is very active, for instance, in \cite{AD} the authors proved that certain compact complex surfaces admit a LCS structure. This is an important step towards showing that every complex surface (except Belgun's counterexamples of some Inoue surfaces) carries a LCK metric. 
LCS manifolds are also very important in theoretical physics, in particular locally conformal symplectic structures play an important role in Hamiltonian mechanics, generalizing the usual description of the phase space in terms of symplectic geometry. 
Indeed, the phase space of a Hamiltonian system is the cotangent bundle $T^*M$ of a manifold $M$ which parametrizes the positions of the physical system. The cotangent bundle $T^*M$ is in a natural way a symplectic manifold, where the symplectic $2$-form $\omega$ is given by the differential of the tautological $1$-form on $T^*M$. Since $\omega$ is non degenerate, then the Hamiltonian vector field is uniquely determined by $\omega$. The fact that non-degeneracy is a local condition implies that the definition of the Hamiltonian vector field is local and therefore locally conformal symplectic manifolds provide an adequate and more general context for Hamiltonian mechanics. It can be seen that the cotangent bundle $T^*M$ admits a canonical exact locally conformal symplectic structure (see for instance \cite{Baz1}, \cite{HR2}).

There is a way to distinguish LCS structures on a manifold, since they can be of the first or of the second kind in the sense of Vaisman (see \cite{V}). To do this distinction one considers infinitesimal automorphisms of $(\omega,\theta)$, i.e. $X\in\mathfrak{X}(M)$ such that $\textrm L_X\omega=0$ where $\textrm L$ denotes the Lie derivative. This implies $\textrm L_X\theta=0$ as well and therefore $\theta(X)$
is a constant function on $M$. If there exists a infinitesimal automorphism $X$ with $\theta(X)\neq0$, the LCS structure is said to be of the first kind, and it is of the second kind otherwise.

In this work we focus on left invariant LCS structures on Lie groups or equivalently LCS structures on their Lie algebras. We also study the existence of lattices in these Lie groups in order to obtain compact examples of manifolds admitting LCS structures.  

In \cite{BM} it was shown that any LCS structure on a nilpotent Lie algebra is of the first kind and they proved that certain Lie algebras with a LCS structure of the first kind are a double extension of a symplectic Lie algebra. 

In this work we consider a larger class of Lie algebras which includes the nilpotent ones, namely the Lie algebras of type I. Recall that a Lie algebra $\g$ is said to be of type I if all the eigenvalues of the operator $\ad_X$ are imaginary for all  $X\in\g$ (see \cite{OnVi}).
Note also that a Lie algebra $\g$ of type I is in particular unimodular, which is a necessary condition for the associated simply connected Lie group to admit lattices according to a result of \cite{Mil}. Recall that a Lie algebra is unimodular if $\tr(\ad_X)=0$ for any $X\in\g$.
Our aim in this work is to study Lie algebras of type I equipped with LCS structures and the existence of lattices in the associated simply connected Lie groups.


\medskip
 
The outline of this article is as follows. 
In Section $2$ we recall some definitions and known results about LCS structures on manifolds and on Lie algebras. In particular we recall some results of \cite{BM} about LCS structures of the first kind on Lie algebras. 
In Section $3$ we study Lie algebras of type I and we prove our first result about the Morse-Novikov cohomology for a Lie algebra of type I (see Corollary \ref{tipoI-coho-trivial}). Then we focus on LCS structures on these Lie algebras and we prove that any Lie algebra of type I admits only LCS structures of the first kind (see Corollary \ref{ImgPuras-1tipo}).
We determine all the $4$-dimensional Lie algebras of type I and we also determine all the $5$-dimensional Lie algebras of type I admitting a contact structure. Then we use this classification to show examples of $6$-dimensional Lie algebras of type I admiting a LCS structure. 
Finally in Section $4$ we exhibit lattices in the simply connected Lie groups associated to some of these Lie algebras in order to produce compact solvmanifolds equipped with invariant LCS structure.

\

\noindent \textbf{Acknowledgements.} This work was partially supported by CONICET, SECyT-UNC (Argentina) and the Research Foundation Flanders (Project G.0F93.17N). I would like to thank A.~Andrada for his interesting comments and useful suggestions on the first versions of this paper.

\

\section{Preliminaries}

\medskip

A \textit{locally conformal symplectic} 
structure (LCS for short) on the manifold $M$ is a non degenerate $2$-form $\omega$ such that 
there exists an open cover $\{U_i\}$ and smooth functions $f_i$ on $U_i$ such that 
\[\omega_i=\exp(-f_i)\omega\] 
is a symplectic form on $U_i$, i.e. $d\omega_i=0$. This condition is equivalent to requiring that
\begin{equation}\label{lcs}
d\omega=\theta\wedge\omega
\end{equation} 
for some closed $1$-form $\theta$, called the Lee form. 
Moreover, $M$ is called globally conformal symplectic (GCS) if there exist  a $C^{\infty}$ function, $f:M\to\R$, such that $\exp(-f)g$ is a symplectic form. Equivalently, $M$ is a GCS 
manifold if there exists a exact $1$-form $\theta$ globally defined on $M$ such that 
$d\omega=\theta\wedge\omega$.   
The pair $(\omega, \theta)$ will be called a LCS structure on $M$.

It is well known that
\begin{itemize}
	\item If $(\omega,\theta)$ is a LCS structure on $M$, then $\omega$ is symplectic if and only if 
	$\theta=0$. Indeed, $\theta\wedge\omega=0$ and $\omega$ non degenerate imply $\theta=0$. 
	\item The same argument shows that $\theta$ is uniquely determined by equation \eqref{lcs}, but there is not an explicit formula for the Lee form.
	\item If $\omega$ is a non degenerate $2$-form on $M$, with $\dim M\ge 6$, such that \eqref{lcs}
	holds for some $1$-form $\theta$ then $\theta$ is automatically closed and therefore $M$ is LCS.
\end{itemize}

\

If we add a Hermitian structure $(J,g)$ on $M$ compatible with the LCS $2$-form $\omega$, i.e. $\omega(\cdot,\cdot)=g(J\cdot,\cdot)$, we arrive to the notion of {\em locally conformal K\"ahler} (LCK
for short) structure on $M$, where $J$ is a complex structure and $g$ is a Hermitian metric. Equivalently, $(M,J,g)$ is a locally conformal K\"ahler manifold if there exists an open covering $\{ U_i\}_{i\in I}$ of $M$ and a family $\{ f_i\}_{i\in I}$ of $C^{\infty}$ functions, $f_i:U_i \to \R$, such that each local metric 
\[g_i=\exp(-f_i)\,g|_{U_i} \]
is K\"ahler. 
The Lee form $\theta$ is completely determined by $\omega$, and in this case there is an explicit formula given by
\[\theta=-\frac{1}{n-1}(\delta\omega)\circ J, \]
where $\delta$ is the codifferential and $2n$ is the dimension of $M$.
There is a distinguished class of LCK manifolds satisfying that the Lee form is parallel with respect to the Levi Civita connection of the Hermitian metric $g$, namely Vaisman manifolds which have been much studied by Vaisman (see \cite{V2,V3}) and recently by many authors, see for instance \cite{Ov, GMO}.


\

Returning to the class of LCS manifolds, we recall next a definition due to Vaisman (see \cite{V}) about two different types of LCS structures. If $(\omega, \theta)$ is a LCS 
structure on $M$, a vector field $X$ is called an infinitesimal automorphism of 
$(\omega,\theta)$ if $\textrm{L}_X\omega=0$, where $\textrm{L}$ denotes the Lie derivative. This 
implies $\textrm{L}_X\theta=0$ as well and, as a consequence, $\theta(X)$ is a constant function on $M$. We consider  $\chi_\omega(M)=\{X\in\mathfrak{X}(M): \textrm{L}_X\omega=0\}$ which is a Lie subalgebra of $\mathfrak{X}(M)$, then the map $\theta|_{\chi_\omega(M)} : \chi_\omega(M) \to \R$ is a well defined Lie algebra morphism called the Lee morphism. 
If there exists an infinitesimal automorphism $X$ such that $\theta(X)\neq 0$, the LCS structure $(\omega,\theta)$ is said to be of {\em the first kind}, and it is of {\em the second kind} otherwise. This condition is equivalent to verifing whether the Lee morphism is surjective or identically zero.

There is more information about LCS structures of the first kind, for example, in \cite{V} interesting relations with contact geometry are shown and it is proved that a manifold with a LCS structure of the first kind admits distinguished foliations.

\smallskip

There is another way to distinguish LCS structures in terms of a suitable cohomology. In order to do this,
one can deform the de Rham differential $d$ to obtain the adapted 
differential operator 
\[d_\theta \alpha= d\alpha -\theta\wedge\alpha,\]
for any differential form $\alpha \in \Omega^*(M)$.
Since $\theta$ is $d$-closed, this operator satisfies $d_\theta^2=0$, thus it defines the adapted cohomology 
$H_\theta^*(M)$ of $M$ relative to the closed $1$-form $\theta$, known as the {\em Morse-Novikov cohomology}. If $\theta$ is exact, it is easy to see that $H_\theta^*(M)\cong H_{dR}^*(M)$. It is known that if $M$ is a compact oriented $n$-dimensional manifold, then 
$H_\theta^0(M)= H_\theta^n(M)=0$ for any non exact closed $1$-form $\theta$ (see for instance 
\cite{GL,Ha}). It is also known that for any compact Vaisman manifold the Morse-Novikov cohomology is trivial (see \cite{LLMP}). For any LCS structure $(\omega,\theta)$ on $M$, the $2$-form $\omega$ 
defines a cohomology class $[\omega]_\theta\in H_\theta^2(M)$, since 
$d_\theta\omega=d\omega-\theta\wedge \omega=0$. Because of that, it is natural to study the Morse-Novikov cohomology for a LCS manifold with respect to its Lee form.
The LCS structure $(\omega,\theta)$ is said to be {\em exact} if $\omega$ is $d_\theta$-exact or $[\omega]_\theta =0$, i.e. $\omega= d\eta-\theta\wedge \eta$ for some $1$-form $\eta$, and it is {\em non-exact} if $[\omega]_\theta \neq0$. 
It was proved in \cite{V} that if the LCS structure $(\omega,\theta)$
is of the first kind on $M$ then $\omega$ is $d_\theta$-exact, i.e. $[\omega]_\theta=0$. The converse, however, need not be true.

\

In this work we focus on left invariant LCS structures on Lie groups. Recall that a LCS structure $(\omega,\theta)$ on a Lie group $G$ is called left invariant 
if $\omega$ is left invariant, which easily implies that $\theta$ is also left invariant using condition \eqref{lcs}. 
Accordingly, we say that a Lie algebra $\g$ admits a {\em locally conformal symplectic} (LCS) 
structure if there exist $\omega\in\alt^2\g^*$ and $\theta \in \g^*$, with $\omega$ non degenerate 
and $\theta$ closed, such that \eqref{lcs} is satisfied. Since $\omega$ is a non degenerate $2$-form, we have an isomorphism $\g\to\g^*$ given by $X\to i_X \omega$. Therefore there exists a distinguished element $V\in\g$ such that $i_V \omega=\theta$, this vector is called the \text{Lee vector} of the LCS structure $(\omega,\theta)$ following the notation of \cite{BM}.

As in the case of manifolds we have that a LCS structure $(\omega,\theta)$ on a Lie algebra $\g$ 
can be of the first kind or of the second kind. Indeed, let us denote by $\g_\omega$ the set of 
infinitesimal automorphisms of the LCS structure, that is, 
\begin{equation}\label{autom}
\g_\omega = \{x\in\g: \textrm{L}_x\omega=0\} = \{x\in\g: \omega([x,y],z)+\omega(y,[x,z])=0 \; \text{for all} \; y,z\in\g\}.
\end{equation}
Note that $\g_\omega \subset \g$ is a Lie subalgebra, thus the restriction of $\theta$ to 
$\g_\omega$ is a Lie algebra morphism called {\em Lee morphism.} The LCS structure $ (\omega,\theta)$ is said to be {\em of the first kind} if the Lee morphism is surjective, and {\em of the second kind} if it is identically zero (see \cite{BM}).

\smallskip

For a Lie algebra $\g$ and a closed $1$-form $\theta\in\g^*$ we also have the Morse-Novikov cohomology 
$H_\theta^*(\g)$ defined by the differential operator \[d_\theta \alpha= d\alpha 
-\theta\wedge\alpha,\] 
on $\alt^* \g^*$. According to \cite{Mil}, this Morse-Novikov cohomology coincides with the Lie algebra 
cohomology of $\g$ with coefficients in a $1$-dimensional $\g$-module $V_{\theta}$, where the 
action of $\g$ on $V_{\theta}$ is given by
\begin{equation}\label{act}
Xv=-\theta(X)v, \quad X\in\g,\, v\in V_{\theta}.
\end{equation}
The fact that $\theta$ is closed guarantees that this is a Lie algebra representation. 

As in manifolds, we have that a LCS structure $(\omega,\theta)$ on a Lie algebra is said to be exact if $[\omega]_\theta =0$ or non exact if $[\omega]_\theta\neq0$. 


\smallskip

Returning to Lie groups we know that on a simply connected Lie group any left invariant LCS structure turns 
out to be globally conformal to a symplectic structure, which is equivalent to having a symplectic structure on the Lie group. Therefore we will study 
compact quotients of such a Lie group by discrete subgroups, which will be non 
simply connected and will inherit a 'strict' LCS structure. Recall that a discrete subgroup 
$\Gamma$ of a simply connected Lie group $G$ is called a \textit{lattice} if the quotient 
$\Gamma\backslash G$ is compact.  The quotient $\Gamma\backslash G$ is known as a solvmanifold if 
$G$ is solvable and as a nilmanifold if $G$ is nilpotent, and in these cases we have that 
$\pi_1(\Gamma\backslash G)\cong \Gamma$. 

We note that a LCS structure of the first kind on a Lie 
algebra $\g$ induces a LCS structure of the first kind on any solvmanifold $\Gamma\backslash G$ with $\text{Lie}(G)=\g$.
  

\

\subsection{Lie algebras with LCS structures of the first kind}

Let $\g$ a Lie algebra with a LCS structure $(\omega, \theta)$ of the first kind. Then, according to \cite{BM}, $\omega$ is $d_\theta$-exact, that is, there exists a $1$ form $\eta$ such that $d_\theta\eta=d\eta-\theta\wedge\eta=\omega$. There is a distinguished vector $U\in\g$ such that $\eta=-i_U\omega$, this vector is called the anti-Lee vector.

Lie algebras equipped with LCS structures of the first kind were studied recently in \cite{BM}. We briefly recall some results. Firstly, it is proved that if $\g$ is a unimodular Lie algebra with an exact LCS structure $(\omega,\theta)$ then $(\omega,\theta)$ is of the first kind. As we mentioned above the converse is true, due to Vaisman. Therefore we have 

\begin{prop}[\cite{BM}]\label{1º_exacta}
	If $\g$ is a unimodular Lie algebra with a LCS structure $(\omega,\theta)$, then $(\omega,\theta)$ is of the first kind if and only if $(\omega,\theta)$ is exact.
\end{prop}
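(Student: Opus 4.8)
The plan is to prove the two implications separately, since only one of them requires the unimodularity hypothesis. The implication ``first kind $\Rightarrow$ exact'' is already available to us: it is exactly the result of Vaisman recalled above, namely that an LCS structure of the first kind satisfies $[\omega]_\theta=0$, and this holds with no unimodularity assumption. So the real content is the converse, and I would devote the argument to showing that when $\g$ is unimodular an exact LCS structure must be of the first kind.

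For the converse I would start from the hypothesis that $(\omega,\theta)$ is exact, i.e. there is $\eta\in\g^*$ with $\omega=d_\theta\eta=d\eta-\theta\wedge\eta$. Since $\omega$ is nondegenerate I introduce the anti-Lee vector $U\in\g$ determined by $\eta=-i_U\omega$. The aim is to exhibit an infinitesimal automorphism $X$ with $\theta(X)\neq0$, and the natural candidate is $U$ itself. The key computation is $\textrm{L}_U\omega$, carried out with Cartan's formula $\textrm{L}_U\omega=d(i_U\omega)+i_U(d\omega)$: substituting $i_U\omega=-\eta$, $d\omega=\theta\wedge\omega$, and the identity $i_U(\theta\wedge\omega)=\theta(U)\,\omega-\theta\wedge(i_U\omega)$, everything collapses to
\[
\textrm{L}_U\omega=(\theta(U)-1)\,\omega .
\]
Hence $U$ is an infinitesimal automorphism precisely when $\theta(U)=1$, and in that case it automatically has $\theta(U)\neq0$, so that $(\omega,\theta)$ is of the first kind. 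The whole matter is therefore reduced to checking the single scalar identity $\theta(U)=1$.

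This last step is where unimodularity is indispensable, and I expect it to be the main obstacle in the sense that it is the only nonformal input. Writing $\dim\g=2n$, I would evaluate the Lie derivative of the top form $\omega^n\in\alt^{2n}\g^*$ in two ways. From the displayed identity, since $\textrm{L}_U$ is a derivation commuting with $\wedge$,
\[
\textrm{L}_U(\omega^n)=n\,(\textrm{L}_U\omega)\wedge\omega^{n-1}=n\,(\theta(U)-1)\,\omega^n .
\]
On the other hand, $\textrm{L}_X$ acts on the one-dimensional space $\alt^{2n}\g^*$ by the scalar $-\tr(\ad_X)$, which vanishes for every $X$ because $\g$ is unimodular; therefore $\textrm{L}_U(\omega^n)=0$. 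As $\omega^n\neq0$ by nondegeneracy of $\omega$, comparing the two expressions forces $\theta(U)=1$. Consequently $\textrm{L}_U\omega=0$ and $\theta(U)\neq0$, so $U$ witnesses that $(\omega,\theta)$ is of the first kind, which completes the proof.
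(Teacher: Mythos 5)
Your proposal is correct. Note that the paper itself does not prove this proposition: it is imported wholesale, with the implication ``first kind $\Rightarrow$ exact'' attributed to Vaisman and the converse to \cite{BM}, so your argument should be compared with that reference rather than with anything in the text. Your treatment of the forward direction coincides with the paper's (a citation of Vaisman's result, valid without unimodularity). For the converse, your computation is sound: Cartan's formula in the Chevalley--Eilenberg complex together with $i_U\omega=-\eta$, $d\omega=\theta\wedge\omega$ and $\omega=d\eta-\theta\wedge\eta$ does collapse to $\textrm{L}_U\omega=(\theta(U)-1)\,\omega$, and the unimodularity input is used exactly where it must be: $\textrm{L}_X$ acts on $\alt^{2n}\g^*$ by $-\tr(\ad_X)=0$, while $\textrm{L}_U(\omega^n)=n(\theta(U)-1)\,\omega^n$ with $\omega^n\neq0$, forcing $\theta(U)=1$, so the anti-Lee vector is an infinitesimal automorphism on which $\theta$ does not vanish and the Lee morphism is onto. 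This is essentially the argument of Bazzoni--Marrero, so you have not found a genuinely different route; what your write-up adds, relative to the paper under review, is a self-contained proof of the one nontrivial direction that the paper leaves to the literature.
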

In \cite{BM} it is also established a relation between LCS structures of the first kind and both contact and symplectic structures. Recall that a $(2n+1)$-dimensional Lie algebra $\h$ is a contact Lie algebra if it admits a contact structure, that is, a $1$-form $\eta$ such that $\eta\wedge (d\eta)^n\neq0$. This form $\eta$ is called the contact form and the unique vector $V\in\h$ satisfying $\eta(V)=1$ and $i_Vd\eta=0$ is called the Reeb vector of the contact structure.

\begin{prop}[\cite{BM}]\label{lcs_contact}
	There is a one to one correspondence between contact Lie algebras $(\h,\eta)$ of dimension $(2n+1)$ endowed with a derivation $D$ such that $\eta\circ D=0$ and $(2n+2)$-dimensional LCS Lie algebras  of the first kind $(\g,\omega,\theta)$. The relation is given by $\h=\ker\theta$, $\omega=d_\theta \eta$ and $D=\ad_U$ where $U$ is the anti-Lee vector.
\end{prop}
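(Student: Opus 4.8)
The plan is to establish the claimed bijection by constructing explicit maps in both directions and verifying they are mutually inverse. The core idea is that the condition $d_\theta\eta = \omega$ from a first-kind LCS structure should, upon restricting to $\h = \ker\theta$, decouple into the contact condition on $\h$ together with the compatibility between the derivation $D = \ad_U$ and the contact form.

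First I would set up the correspondence from LCS to contact data. Given $(\g,\omega,\theta)$ of the first kind, recall from the preceding discussion that $\omega$ is $d_\theta$-exact, so there is a $1$-form $\eta$ with $\omega = d\eta - \theta\wedge\eta$, and an anti-Lee vector $U$ with $\eta = -i_U\omega$. I would normalize using the Lee vector $V$ (defined by $i_V\omega = \theta$) so that $\theta(U) = 1$ and $\g = \h \oplus \R U$ where $\h = \ker\theta$. The candidate contact form is $\eta|_\h$. The main computation is to show $\eta\wedge(d\eta)^n \neq 0$ on $\h$: here I would use the non-degeneracy of $\omega$ on $\g$ together with the splitting $\omega = d\eta - \theta\wedge\eta$ to transfer non-degeneracy of $\omega$ into the statement that $\eta\wedge(d\eta)^n$ is a volume form on the $(2n+1)$-dimensional $\h$. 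The derivation is $D = \ad_U$, which is automatically a derivation of $\g$, but one must check it preserves $\h$ and satisfies $\eta\circ D = 0$; this last condition should follow from $\eta(U)=0$ (since $\eta = -i_U\omega$ gives $\eta(U) = -\omega(U,U) = 0$) combined with the infinitesimal automorphism property $\textrm L_U\omega = 0$ that holds because $U$ realizes the first-kind condition.

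Conversely, given a contact Lie algebra $(\h,\eta)$ with a derivation $D$ satisfying $\eta\circ D = 0$, I would build $\g = \h \rtimes_D \R U$ as the semidirect product where $\ad_U|_\h = D$, define $\theta \in \g^*$ by $\theta|_\h = 0$ and $\theta(U) = 1$, extend $\eta$ to $\g$ by $\eta(U) = 0$, and set $\omega = d_\theta\eta = d\eta - \theta\wedge\eta$. I would then verify that $\theta$ is closed (immediate since $d\theta$ pairs with brackets and $D$ being a derivation controls this), that $\omega$ is $d_\theta$-closed hence defines an LCS structure once non-degeneracy is checked, and that the structure is of the first kind with $U$ the anti-Lee vector and the prescribed $D = \ad_U$. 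Non-degeneracy of $\omega$ on all of $\g$ is the reverse of the earlier computation: the contact condition $\eta\wedge(d\eta)^n\neq0$ on $\h$ is exactly what is needed for $\omega^{n+1} \neq 0$ on $\g$.

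The main obstacle I expect is the non-degeneracy equivalence, namely matching the contact condition $\eta\wedge(d\eta)^n \neq 0$ on the $(2n+1)$-dimensional $\h$ with non-degeneracy of $\omega = d\eta - \theta\wedge\eta$ on the $(2n+2)$-dimensional $\g$. The delicate point is bookkeeping the extra dimension spanned by $U$ and the role of $\theta\wedge\eta$: I would compute $\omega^{n+1}$ and show it reduces, up to a nonzero scalar, to $\theta\wedge\eta\wedge(d\eta)^n$, so that its non-vanishing is equivalent to the contact condition together with $\eta(U)\neq 0$ being correctly handled by the $\theta$-term. Verifying that the two constructions are mutually inverse is then largely a matter of tracking the identifications $\h = \ker\theta$, $U$ the anti-Lee vector, and $D = \ad_U$ through both directions, which should be routine once the structural maps are pinned down.
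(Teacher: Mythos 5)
The paper does not actually prove this proposition: it is quoted from \cite{BM}, and the only related argument the text supplies is the sketch of the companion correspondence in Proposition \ref{lcs_symplectic}. So your proposal can only be measured against the standard proof in \cite{BM}, and it does follow that route correctly: restrict $\eta$ to the ideal $\h=\ker\theta$ (an ideal because $d\theta=0$) in one direction, and in the other build the semidirect product $\g=\R U\ltimes_D\h$ with $\theta$ dual to $U$, $\eta(U)=0$, $\omega=d_\theta\eta$. The two computational pivots you single out are the right ones: (i) $\eta\circ\ad_U=0$, which in the forward direction follows from $\textrm{L}_U\omega=0$ --- indeed, Cartan's formula with $\eta=-i_U\omega$ and $\omega=d_\theta\eta$ gives $\textrm{L}_U\omega=(\theta(U)-1)\,\omega$, so $U\in\g_\omega$ is exactly the normalization $\theta(U)=1$, and conversely $\textrm{L}_U\omega=0$ with $\theta(U)=1$ yields $\omega=d_\theta\eta$; and (ii) since $i_U d\eta=0$ forces $(d\eta)^{n+1}=0$, one gets $\omega^{n+1}=-(n+1)\,\theta\wedge\eta\wedge(d\eta)^n$, which makes non-degeneracy of $\omega$ on $\g$ equivalent to the contact condition for $\eta|_\h$; your expectation that this is the delicate bookkeeping step is accurate, and it works in both directions. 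One wording slip to fix: the normalization $\theta(U)=1$ does not come from ``normalizing using the Lee vector $V$''; it comes from the definition of first kind (surjectivity of the Lee morphism on $\g_\omega$). The Lee vector $V$ plays a different role here: it lies in $\h$, satisfies $\eta(V)=1$ and $i_V d\eta=0$, i.e.\ it is the Reeb vector of the contact structure.
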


It is well known that if $(\h,\eta)$ is a contact Lie algebra then the center $\z(\h)$ of $\h$ has dimension at most $1$, furthermore if $\z(\h)$ is not trivial then it is generated by the Reeb vector, that is, $\z(\h)=\text{span}\{V\}$.
For a Lie algebra $\g$ with a LCS structure of the first kind where the Lee vector $V$ is central the following result is known 

\begin{prop}[\cite{BM}]\label{lcs_symplectic}
	There is a one to one correspondence between Lie algebras of dimension $2n +2$ admitting a LCS structure of the first kind with central Lee vector and symplectic Lie algebras $(\mathfrak{s}, \beta)$ of dimension $2n$ endowed with a derivation $E$ such that $\beta(EX,Y)+\beta(X,EY)=0$ for all $X,Y\in\mathfrak s$.
\end{prop}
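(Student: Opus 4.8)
The strategy is to factor the desired equivalence through the contact correspondence of Proposition \ref{lcs_contact}. That proposition already matches a $(2n+2)$-dimensional first-kind LCS algebra $(\g,\omega,\theta)$ with a $(2n+1)$-dimensional contact algebra $(\h,\eta)$, where $\h=\ker\theta$ (an ideal, since $\theta$ is closed), equipped with a derivation $D=\ad_U$ satisfying $\eta\circ D=0$. It therefore suffices to prove two things: first, that the extra hypothesis ``the Lee vector $V$ is central'' translates, on the contact side, into ``the Reeb vector of $(\h,\eta)$ spans the center and $DV=0$''; and second, that a contact algebra with central Reeb vector is exactly a central extension of a symplectic algebra $(\mathfrak{s},\beta)$, with $D$ descending to a derivation $E$ of $\mathfrak{s}$ and the condition $\eta\circ D=0$ turning into the skew relation $\beta(EX,Y)+\beta(X,EY)=0$.

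For the first point I would show that the Lee vector $V$ spans the Reeb line of $(\h,\eta)$. Since $\theta(V)=\omega(V,V)=0$ we have $V\in\ker\theta=\h$, and $V\neq0$ because $\theta\neq0$ for a first-kind structure. Applying $i_V$ to $\omega=d\eta-\theta\wedge\eta$ and using $\theta(V)=0$ gives $i_Vd\eta=(1-\eta(V))\theta$, which vanishes on $\h$; hence $i_V(d\eta)|_\h=0$, the Reeb condition. Non-degeneracy of $d\eta$ on the contact distribution $\ker(\eta|_\h)$ then forces $\eta(V)\neq0$, so $V$ is a nonzero multiple of the Reeb vector. Because $\g=\h\oplus\R U$, the vector $V$ is central in $\g$ if and only if it is central in $\h$ and $[U,V]=DV=0$; combined with the quoted fact that $\z(\h)$ is at most one-dimensional and, when nontrivial, spanned by the Reeb vector, centrality of $V$ in $\g$ is equivalent to $\z(\h)=\operatorname{span}\{V\}$ together with $DV=0$.

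For the second point, set $\mathfrak{s}=\h/\operatorname{span}\{V\}$, of dimension $2n$. Since $i_Vd\eta=0$ and $V$ is central, the cocycle $\beta([X],[Y])=\eta([\tilde X,\tilde Y]_\h)$ (with $\tilde X,\tilde Y$ any lifts) is well defined, closed, and non-degenerate on $\mathfrak{s}$ by the contact condition, so $(\mathfrak{s},\beta)$ is symplectic. As $DV=0$, the derivation $D$ descends to a derivation $E$ of $\mathfrak{s}$, and the key computation is
\[
\beta(E[X],[Y])+\beta([X],E[Y])=\eta\bigl([D\tilde X,\tilde Y]_\h+[\tilde X,D\tilde Y]_\h\bigr)=\eta\bigl(D[\tilde X,\tilde Y]_\h\bigr)=0,
\]
where the last equality is precisely $\eta\circ D=0$; this yields the required compatibility of $E$ with $\beta$.

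Conversely, from $(\mathfrak{s},\beta,E)$ I would reconstruct $\h$ as the central extension $\h=\mathfrak{s}\oplus\R V$ with bracket $[X,Y]_\h=[X,Y]_{\mathfrak{s}}+\beta(X,Y)V$; taking $\eta$ dual to $V$ gives $d\eta=-\beta$ and $i_Vd\eta=0$, so $\eta\wedge(d\eta)^n\neq0$ and $V$ is the central Reeb vector. Extending $E$ by $DV=0$ defines $D$ on $\h$, and a direct check shows $D$ is a derivation exactly because $E$ is a derivation of $\mathfrak{s}$ and $\beta(EX,Y)+\beta(X,EY)=0$ (this identity is what makes the $V$-component of the derivation rule vanish), while $\eta\circ D=0$ is immediate. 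Feeding $(\h,\eta,D)$ into Proposition \ref{lcs_contact} produces a first-kind LCS algebra $(\g,\omega,\theta)$ whose Lee vector is $V$, central by the first point. The main thing to be careful about is that these two constructions are mutually inverse and that all objects descend and lift unambiguously along $\h\leftrightarrow\mathfrak{s}$; the genuinely delicate bookkeeping is the identification of the Lee vector with the Reeb vector and the verification that $D$ preserves the central line, since everything else reduces to the already-established correspondence of Proposition \ref{lcs_contact}.
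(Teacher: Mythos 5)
Your proof is correct and takes essentially the same route as the paper: both factor the correspondence through Proposition \ref{lcs_contact} and then identify the contact algebra $(\h,\eta)$ with a central extension of a symplectic Lie algebra (the double extension picture), with $D=\ad_U$ descending to the symplectic derivation $E$. The only cosmetic difference is that you realize $\mathfrak{s}$ as the quotient $\h/\operatorname{span}\{V\}$ whereas the paper works with the complement $\ker\eta\subset\h$ carrying the projected bracket $[\cdot,\cdot]_{\mathfrak s}$, and you spell out the identification of the Lee vector with the Reeb vector (and the equivalence of centrality in $\g$ with centrality in $\h$ plus $DV=0$) that the paper leaves implicit.
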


We explain briefly the correspondence in Proposition \ref{lcs_symplectic}. Let $\g$ be a $(2n+2)$-dimensional Lie algebra with a LCS structure of the first kind $(\omega, \theta)$ with central Lee vector. It follows from Proposition \ref{lcs_contact} that $(\ker\theta,\eta)$ is a contact Lie algebra. It can be seen that $\mathfrak s=\ker\theta$ is a Lie algebra with Lie bracket $[\cdot,\cdot]_{\mathfrak s}$ given by $$[X,Y]=d\eta(X,Y)V + [X,Y]_{\mathfrak s}$$
for all $X,Y\in\mathfrak s$, where $V$ is the Lee vector and $[X,Y]_{\mathfrak s}\in\mathfrak s$ denotes the component of $[X,Y]$ in $\mathfrak s$.
It is clear that $(\ker\eta, [\cdot,\cdot]_{\mathfrak s}, d\eta)$ is a symplectic Lie algebra and if $U$ is the anti-Lee vector, then $E=\ad_U|_{\ker\eta}$ is a derivation of $\ker\eta$ satisfying $\beta(EX,Y)+\beta(X,EY)=0$ for all $X,Y\in\mathfrak s$. Any derivation satisfying this condition is called a \textit{symplectic derivation}.

Conversely, let $(\mathfrak s, [\cdot,\cdot]_{\mathfrak s}, \beta)$ be a symplectic Lie algebra endowed with a derivation $E$ such that $\beta(EX,Y)+\beta(X,EY)=0$ for all $X,Y\in\mathfrak s$. 
Consider now the Lie algebra $\g$ as the double extension of $(\mathfrak s, [\cdot,\cdot]_{\mathfrak s}, \beta, E)$, that is, $\g$ is given by $\g=\R U\oplus\R V\oplus\mathfrak s$ with Lie bracket defined by
$[X,Y]=\beta(X,Y)V + [X,Y]_{\mathfrak s}$ and $[U,X]=EX$, for all $X,Y\in\mathfrak s$. We define the $1$-forms $\theta,\eta\in\g^*$ by $\theta(U)=1, \theta(V)=0, \eta(V)=1, \eta(U)=0$ and $\theta(X)=\eta(X)=0$ for all $X\in\mathfrak s$. Then $\omega=d\eta-\theta\wedge\eta$ defines a LCS structure of the first kind on $\g$ and the Lee vector $V$ is central. In this case we will say that $\g$ is the double extension of $\mathfrak s$ by the pair $(E,\beta)$ (see \cite{ARS} for more details).
 
\

\subsection{Lie algebras of type I}
Recall that a Lie algebra $\g$ is said to be of \textit{type I} if all the eigenvalues of the operator $\ad_X$ are imaginary for all $X\in\g$ (some of them may be equal to zero). In \cite{AHG} these Lie algebras are called algebras of type R, an abbreviation of rigid. A Lie group is called a group of type I if its Lie algebra is of type I. A connected Lie group $G$ is characterized by the fact that all eigenvalues of the operator $\operatorname{Ad}_g$ have absolute value equal to $1$ for all $g\in G$.

This class of Lie algebras is in some sense opposite to the class of completely solvable Lie algebras. Recall that Lie algebra $\g$ is said to be of completely solvable if all the eigenvalues of the operator $\ad_X$ are real for all $X\in\g$.  Note that any nilpotent Lie algebra is of type I. 
Moreover, the intersection between the class of Lie algebras of type I and the class of completely solvable Lie algebras is exactly the class of nilpotent Lie algebras. We summarize some properties of Lie algebras of type I (see \cite{OnVi} for further details about Lie groups or Lie algebras of type I):

\begin{itemize}
	\item a Lie algebra of type I is in particular unimodular.
	\item any subalgebra of a Lie algebra of type I is of type I as well.
	\item let $\g$ be a Lie algebra with Levi decomposition $\g=\mathfrak r\oplus \mathfrak s$ where $\mathfrak r$ is a radical and $\mathfrak s$ is the Levi factor. According to \cite{AHG} we have that $\g$ is of type I if and only if $\mathfrak s$ is compact and $\mathfrak r$ is of type I. 
\end{itemize}
The study of Lie groups (Lie algebras) of type I terefore reduces to the solvable case. 

\

\section{LCS structures on solvable Lie algebras of type I}

In this section we study LCS structures on Lie algebras of type I and we show that all of them are of the first kind. 
In \cite{BM} it is proved that any LCS structure on a nilpotent Lie algebra is of the first kind and the Lee vector is central. Then using the classification of nilpotent Lie algebras of dimension $4$ and $6$ they determined which of them admit a LCS structure. In our work we extend the study of LCS structures to a larger class of Lie algebras which contains the nilpotent ones, namely the Lie algebras of type I.

\medskip

We begin our study with this result about the Morse-Novikov cohomology of certain class of Lie algebra, which includes the Lie algebras of type I. 

\begin{thm}\label{millo}
	Let $\g$ be a Lie algebra and let $\theta\in\g^*$ such that $\theta\neq0$ and $d\theta=0$. If there exists $A\in\g$ such that $\theta(A)=1$ and $\ad_A$ has all its eigenvalues in $i\R$, then $H^*_\theta(\g)=0$.
\end{thm}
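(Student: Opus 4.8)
The plan is to produce an explicit contracting homotopy for the complex $(\alt^*\g^*, d_\theta)$, built out of the interior product $i_A$ and an algebraic inverse supplied by the type I hypothesis. The starting point is a Cartan-type identity obtained by feeding $A$ into $d_\theta$. Using the Leibniz rule $i_A(\theta\wedge\alpha)=\theta(A)\,\alpha-\theta\wedge i_A\alpha$ together with $\theta(A)=1$, a direct computation gives, for every $\alpha\in\alt^*\g^*$,
\[
i_A d_\theta\alpha + d_\theta i_A\alpha = (i_A d + d\,i_A)\alpha - \theta(A)\,\alpha = \textrm{L}_A\alpha - \alpha,
\]
where $\textrm{L}_A$ is the Lie derivative along $A$, i.e.\ the coadjoint action extended as a derivation to $\alt^*\g^*$. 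Thus $i_A$ is a homotopy between the identity and the operator $\textrm{L}_A$ on the $d_\theta$-complex, and everything reduces to inverting $\textrm{L}_A-\I$.

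The crux is therefore the invertibility of $\textrm{L}_A-\I$. First I would record the two commutation facts that make the homotopy work: since $\textrm{L}_A$ commutes with $d$ and, because $\textrm{L}_A\theta = i_A d\theta + d(\theta(A)) = 0$ (using $d\theta=0$), it also commutes with $\theta\wedge\cdot$, it commutes with $d_\theta = d - \theta\wedge\cdot$; moreover $[\textrm{L}_A, i_A] = i_{[A,A]} = 0$, so $\textrm{L}_A$ commutes with $i_A$ as well. The essential point is the spectrum of $\textrm{L}_A$: on $\g^*=\alt^1\g^*$ one has $\textrm{L}_A = -(\ad_A)^*$, whose eigenvalues coincide with those of $\ad_A$ up to sign and hence lie in $i\R$ by hypothesis. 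Extending $\textrm{L}_A$ as a derivation, its eigenvalues on $\alt^p\g^*$ are sums of $p$ such numbers, so they remain purely imaginary. Consequently the eigenvalues of $\textrm{L}_A-\I$ are of the form $i\mu-1$ with $\mu\in\R$, none of which vanishes, so $\textrm{L}_A-\I$ is invertible on all of $\alt^*\g^*$.

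Writing $P=(\textrm{L}_A-\I)^{-1}$, which commutes with both $d_\theta$ and $i_A$ since $\textrm{L}_A$ does, I set $h = P\,i_A = i_A\,P$. Combining the displayed identity with the commutations yields
\[
d_\theta h + h\,d_\theta = (d_\theta i_A + i_A d_\theta)\,P = (\textrm{L}_A-\I)\,P = \I
\]
on $\alt^*\g^*$. Hence $h$ is a contracting homotopy: any $d_\theta$-closed $\alpha$ satisfies $\alpha = d_\theta(h\alpha)$, so it is $d_\theta$-exact and $H^*_\theta(\g)=0$. The only genuine obstacle is the invertibility of $\textrm{L}_A-\I$, which is precisely where the type I condition on $\ad_A$ is used; once the spectrum of $\textrm{L}_A$ is confined to $i\R$, the existence of the homotopy $P\,i_A$ is immediate, and all remaining steps are routine verifications of Cartan-calculus identities.
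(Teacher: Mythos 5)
Your proof is correct, but it takes a genuinely different route from the paper. The paper does not construct a homotopy: it splits off the ideal $\ker\theta$, lets $\ad_A^*$ act on the Chevalley--Eilenberg cohomology $H^k(\ker\theta)$, and invokes a theorem of Millionschikov (\cite{Mil}) stating that $H^*_\theta(\g)\neq 0$ if and only if $1$ lies in the spectrum of this induced action on some $H^k(\ker\theta)$; the purely imaginary spectrum of $\ad_A$ (whose sums stay in $i\R$, citing \cite{Ar}) then rules out the eigenvalue $1$. You instead work on the full complex $(\alt^*\g^*,d_\theta)$ and convert the same spectral fact into vanishing by hand: the Cartan identity $i_Ad_\theta+d_\theta i_A=\textrm{L}_A-\I$ (valid because $\theta(A)=1$ and $d\theta=0$), invertibility of $\textrm{L}_A-\I$ (its eigenvalues $i\mu-1$ never vanish), and the contracting homotopy $h=(\textrm{L}_A-\I)^{-1}i_A$. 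Your commutation checks ($\textrm{L}_A$ with $d$, with $\theta\wedge\cdot$, and with $i_A$) are all valid in the Chevalley--Eilenberg complex, so the argument is complete. What your approach buys is self-containedness: no external black box, an explicit homotopy exhibiting each $d_\theta$-closed form as exact, and no need to pass to $\ker\theta$ or to the subtler fact that eigenvalues on cohomology are a subset of eigenvalues on forms. What the paper's approach buys is brevity modulo the citation, and it places the result in the context of Millionschikov's spectral criterion, which the paper also uses to connect with Dixmier's theorem for nilpotent Lie algebras. Both arguments ultimately hinge on the identical observation that sums of purely imaginary eigenvalues of $\ad_A$ can never equal $1$.
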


\begin{proof}
Let $A\in\g$ such that $\theta(A)=1$ and $\ad_A$ has only imaginary eigenvalues (in $i\mathbb R$) and let
\[\ad_A^*: (\ker\theta)^* \to (\ker\theta)^*,\]
be the adjoint operator of $\ad_A:  \ker\theta \to \ker\theta$. This operator can be extended to $\alt^*(\ker\theta)^*$ by
\[\ad_A^*(\alpha\wedge\beta)=\ad_A^*\alpha\wedge\beta + \alpha\wedge\ad_A^*\beta,\]
for $\alpha\in(\ker\theta)^*$ and $\beta\in\alt^p(\ker\theta)^*$.
Since $\ad_A^*$ commutes with the exterior derivative $d$, it defines a map on cohomology 
\[\ad_A^*: H^k(\ker\theta) \to H^k(\ker\theta).\]
Let $\operatorname{Spec}^k_A$ be the set of eigenvalues of the operator $\ad_A^*: H^k(\ker\theta) \to H^k(\ker\theta)$.
In \cite{Mil} it is proved that the twisted cohomology 
$H_\theta^*(\g)$ is non trivial if and only if 
\[ 1\in \displaystyle{\bigcup_{k=1}^n} \operatorname{Spec}^k_A.\] 

The eigenvalues of $\ad_A^*$ in $\alt^k(\ker\theta)^*$ are sums of $k$ eigenvalues of $\ad_A^*$ in $(\ker\theta)^*$ (see \cite{Ar}). It can be seen that the eigenvalues of $\ad_A^*$ on $H^k(\ker\theta)$ are a subset of the previous ones, that is, they also are sums of $k$ eigenvalues of $\ad_A^*$, or equivalently, of $\ad_A$. Since $\ad_A$ has imaginary eigenvalues then $1\notin \operatorname{Spec}^k_A$ for any $k$, and therefore $H_\theta^*(\g)$ is trivial. 
\end{proof}

Recall that for a nilpotent Lie algebra there is a well know result due to Dixmier:  

\begin{thm}[\cite{Dix}]
	Let $\g$ be a nilpotent Lie algebra of dimension $n$. If $\theta\in\g^*$ is non-zero and $d\theta=0$, then $H^p_\theta(\g)=0$ for $0\leq p\leq n$.
\end{thm}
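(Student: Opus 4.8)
The plan is to deduce this statement directly from Theorem \ref{millo}, of which it is a special case. The essential observation is that nilpotency is precisely what forces the relevant adjoint operator to have only imaginary (indeed zero) eigenvalues, so the hypotheses of Theorem \ref{millo} are automatically in force once we normalize $\theta$.

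First I would recall that a finite-dimensional Lie algebra $\g$ is nilpotent exactly when $\ad_X$ is a nilpotent endomorphism of $\g$ for every $X\in\g$ (this is Engel's theorem, equivalently the statement that the lower central series terminates). In particular every eigenvalue of every $\ad_X$ equals $0$, and hence lies in $i\R$; thus any nilpotent Lie algebra is of type I, and this is the feature we exploit.

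Next, since $\theta\neq 0$, I would choose $A\in\g$ with $\theta(A)\neq 0$ and replace $A$ by $A/\theta(A)$, so that $\theta(A)=1$. By the previous paragraph $\ad_A$ is nilpotent, so its unique eigenvalue $0$ lies in $i\R$. The hypotheses of Theorem \ref{millo} are therefore satisfied with this choice of $A$, and that theorem yields $H^*_\theta(\g)=0$, i.e. $H^p_\theta(\g)=0$ for all $0\le p\le n$, which is the assertion.

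There is essentially no obstacle here: all the real work sits in Theorem \ref{millo}, which in turn rests on the eigenvalue computation for $\ad_A^*$ on $\alt^k(\ker\theta)^*$ together with the criterion of \cite{Mil}. If one wished instead to give a self-contained argument independent of Theorem \ref{millo}, the natural route would be induction on $\dim\g$ along the lower central series: pick a nonzero central element $Z$, split into the cases $\theta(Z)=0$ and $\theta(Z)\neq 0$, and use the long exact sequence attached to the central extension $0\to\R Z\to\g\to\g/\R Z\to 0$ to reduce to the lower-dimensional quotient. With Theorem \ref{millo} available, however, this inductive machinery is unnecessary and the one-line reduction above suffices.
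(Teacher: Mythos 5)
Your proposal is correct and is exactly the paper's route: immediately after stating Dixmier's theorem, the paper remarks that it "is a direct consequence of Theorem \ref{millo}," with the implicit argument being precisely your normalization of $A$ so that $\theta(A)=1$ together with the fact (Engel) that $\ad_A$ is nilpotent, hence has only the eigenvalue $0\in i\R$. The alternative inductive argument you sketch is not needed and is not what the paper does, so your one-line reduction matches the intended proof.
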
	

Note now that this result is a direct consequence of Theorem \ref{millo}. Also, the class of Lie algebras of type I is a particular case where we can use Theorem \ref{millo} and we obtain the following result

\begin{cor}\label{tipoI-coho-trivial}
	Let $\g$ a Lie algebra of type I and let $\theta\in\g^*$ such that $\theta\neq0$ and $d\theta=0$. Then $H^*_\theta(\g)=0$.
\end{cor}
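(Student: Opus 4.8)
The plan is to deduce the corollary directly from Theorem~\ref{millo}, since a Lie algebra of type I is precisely designed to supply the hypothesis needed there. The only thing to verify is that, given a nonzero closed $1$-form $\theta\in\g^*$, one can find an element $A\in\g$ with $\theta(A)=1$ whose operator $\ad_A$ has all eigenvalues in $i\R$; but in a Lie algebra of type I \emph{every} $\ad_X$ has imaginary eigenvalues by definition, so the eigenvalue condition is automatic for any choice of $A$, and the substantive point reduces to finding an $A$ normalizing $\theta$ correctly.

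First I would observe that since $\theta\neq0$, the map $\theta\colon\g\to\R$ is surjective, so there exists some $X_0\in\g$ with $\theta(X_0)\neq0$. Setting $A=\theta(X_0)^{-1}X_0$ gives an element with $\theta(A)=1$. Because $\g$ is of type I, the operator $\ad_A$ has all of its eigenvalues in $i\R$. Thus the pair $(\theta,A)$ satisfies exactly the hypotheses of Theorem~\ref{millo}, and we may conclude $H^*_\theta(\g)=0$.

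I expect there to be essentially no obstacle here: the corollary is a specialization of the theorem, and the class of type I Lie algebras was introduced precisely so that the spectral condition $\ad_A$ has eigenvalues in $i\R$ holds for all $A\in\g$ simultaneously. The one place requiring a word of care is the normalization $\theta(A)=1$, which needs $\theta\neq0$ (guaranteed by hypothesis) to ensure $\theta$ is onto $\R$; scaling a vector by a real constant does not affect whether the eigenvalues of its adjoint operator lie in $i\R$, since $\operatorname{Spec}(\ad_{cA})=c\cdot\operatorname{Spec}(\ad_A)$ and $c\in\R$ maps $i\R$ into $i\R$. With these remarks the proof is complete.
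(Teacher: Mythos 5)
Your proof is correct and is exactly the argument the paper intends: the corollary is stated as an immediate specialization of Theorem~\ref{millo}, obtained by picking any $A$ with $\theta(A)=1$ (possible since $\theta\neq0$) and noting that the type I condition makes the spectral hypothesis on $\ad_A$ automatic. Your extra remark about rescaling is harmless but not even needed, since the rescaled element $A$ lies in $\g$ and so $\ad_A$ has imaginary eigenvalues directly by the definition of type I.
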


\smallskip

We consider now a Lie algebra $\g$ endowed with a LCS structure. As a consequence of Theorem \ref{millo} we obtain 
\begin{thm}\label{teo1}
	Let $(\omega,\theta)$ be a LCS structure on a unimodular Lie algebra $\g$. If there exists $A\in\g$ such that $\theta(A)=1$ and $\ad_A$ has all its eigenvalues in $i\R$, then $(\omega,\theta)$ is of the first kind.
\end{thm}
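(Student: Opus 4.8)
The plan is to chain together the two results already established in the excerpt: the vanishing of the Morse-Novikov cohomology from Theorem \ref{millo} and the equivalence between exactness and being of the first kind from Proposition \ref{1º_exacta}.

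First I would record the elementary observation that $\omega$ represents a Morse-Novikov cohomology class. By definition of an LCS structure we have $d\omega = \theta\wedge\omega$, hence
\[
d_\theta\omega = d\omega - \theta\wedge\omega = 0,
\]
so $\omega$ is $d_\theta$-closed and therefore determines a class $[\omega]_\theta \in H^2_\theta(\g)$. Next, I note that the standing hypotheses of the theorem---namely that $\theta\neq 0$ is closed and that there is $A\in\g$ with $\theta(A)=1$ and $\ad_A$ having all eigenvalues in $i\R$---are precisely the hypotheses of Theorem \ref{millo}. Applying it yields $H^*_\theta(\g)=0$, and in particular $H^2_\theta(\g)=0$. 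Consequently $[\omega]_\theta = 0$, which is exactly the statement that $\omega$ is $d_\theta$-exact; that is, the LCS structure $(\omega,\theta)$ is exact.

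Finally, since $\g$ is unimodular by assumption, I would invoke Proposition \ref{1º_exacta}, which asserts that on a unimodular Lie algebra an LCS structure is of the first kind if and only if it is exact. Having just shown $(\omega,\theta)$ is exact, the proposition gives immediately that $(\omega,\theta)$ is of the first kind, completing the proof.

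The honest assessment is that there is no serious obstacle here: the argument is a two-step deduction from results proved earlier. The only points meriting a moment of care are that Theorem \ref{millo} gives vanishing in \emph{every} degree (so that the relevant degree $2$ is covered), and that the direction of the equivalence in Proposition \ref{1º_exacta} we use is ``exact $\Rightarrow$ first kind,'' which is precisely the implication supplied there (the reverse implication being Vaisman's). Both are exactly as stated, so the chaining is direct.
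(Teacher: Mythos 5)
Your proposal is correct and follows exactly the paper's own argument: apply Theorem \ref{millo} (noting $\theta(A)=1$ forces $\theta\neq 0$) to get $H^2_\theta(\g)=0$, conclude $[\omega]_\theta=0$, and then use the ``exact $\Rightarrow$ first kind'' direction of Proposition \ref{1º_exacta} via unimodularity. Nothing to add.
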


\begin{proof}
According to Theorem \ref{millo} we have that $H^*_\theta(\g)=0$.
In particular, $H_\theta^2(\g)=\{0\}$, then $\omega$ is $\theta$-exact. Since $\g$ is unimodular, it follows from Proposition \ref{1º_exacta} that the LCS structure $(\omega,\theta)$ is of the first kind.
\end{proof}

Note that the converse of Theorem \ref{teo1} is not true as we show in the following example. Let $\g$ be the $4$-dimensional Lie algebra denoted by $\mathfrak d_4$ with structure equation $(14,-24,-12,0)$ in the Salamon notation \footnote{The Salamon notation for the Lie algebra $\mathfrak d_4=(14,-24,-12,0)$ means that we fix a coframe $\{e^1,e^2,e^3,e^4\}$ for $\mathfrak d_4^*$ such that $de^1=e^{14}$, $de^2=-e^{24}$, $de^3=-e^{12}$ and $de^4=0$, where $e^{ij}$ means $e^i\wedge e^j$.}. According to \cite{ABP}
$$\omega=e^{12}-\sigma e^{24}, \quad \sigma>0, \quad \quad \theta=\sigma e^4$$
is a LCS structure of the first kind. But we can show that there does not exist an element $A\in\g$ satisfying the conditions of Theorem \ref{teo1}. 
Indeed, suppose $A=ae_1+be_2+ce_3+de_4$, with $a,b,c,d\in\R$. Since $\theta(A)=1$, then $d=\frac 1\sigma\neq0$. The operator $\ad_A$ can be written in the basis $\{e_1,e_2,e_3,e_4\}$ as
\[\begin{pmatrix}
d&0&0&-a\\
0&-d&0&b\\
-b&a&0&0\\
0&0&0&0
\end{pmatrix}.\]
Therefore the eigenvalues of $\ad_A$, $\{0,d,-d\}$ are not all imaginary, since $d\neq0$.

\bigskip

As a consequence of Theorem \ref{teo1} we obtain that a LCS structure on a Lie algebra of type I (if it exists) is necessarily of the first kind. 

\medskip

\begin{cor}\label{ImgPuras-1tipo}
If $\g$ is a Lie algebra of type I, then any LCS structure on $\g$ (if it exists) is of the first kind.
\end{cor}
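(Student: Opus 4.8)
The plan is to derive Corollary \ref{ImgPuras-1tipo} directly from Theorem \ref{teo1}, which has already done essentially all of the work. The strategy is to verify that the two hypotheses of Theorem \ref{teo1} are automatically satisfied whenever $\g$ is of type I and carries a LCS structure $(\omega,\theta)$. There are exactly two things to check: first, that $\g$ is unimodular, and second, that there exists an element $A\in\g$ with $\theta(A)=1$ whose operator $\ad_A$ has all eigenvalues in $i\R$.

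The unimodularity is immediate: the excerpt already records in the list of properties of Lie algebras of type I that \emph{a Lie algebra of type I is in particular unimodular}. So the first hypothesis of Theorem \ref{teo1} holds with no further argument. For the second hypothesis, the key observation is that the $1$-form $\theta$ is nonzero (since it is part of a LCS structure; indeed if $\theta=0$ then $\omega$ would be symplectic, but more to the point $\theta$ being identically zero is incompatible with the setup, and in any case $\theta=0$ would make the Lee morphism trivially zero, so we may assume $\theta\neq 0$). Because $\theta\neq 0$, it is surjective as a map $\g\to\R$, so there exists some $A_0\in\g$ with $\theta(A_0)\neq 0$; rescaling, set $A=A_0/\theta(A_0)$ so that $\theta(A)=1$. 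Now the defining property of a type I Lie algebra is precisely that $\ad_X$ has all its eigenvalues in $i\R$ for \emph{every} $X\in\g$; in particular this holds for $X=A$. Thus the second hypothesis of Theorem \ref{teo1} is satisfied as well.

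With both hypotheses verified, Theorem \ref{teo1} applies and concludes that $(\omega,\theta)$ is of the first kind, which is exactly the assertion of the corollary. I do not expect any genuine obstacle here; the only point requiring a moment's care is the trivial case $\theta=0$, but under the standing conventions for a LCS structure on a Lie algebra the Lee form is the closed $1$-form attached to a genuinely nonsymplectic (strict) structure, and even allowing $\theta=0$ the statement holds vacuously since a symplectic structure is of the first kind with respect to any infinitesimal automorphism. The substance of the result lives entirely in Theorem \ref{teo1} (and behind it, in Theorem \ref{millo} and Proposition \ref{1º_exacta}); the corollary is the clean specialization obtained by noting that type I forces unimodularity and that the eigenvalue condition holds for \emph{all} elements of $\g$, in particular for any normalization $A$ of a vector on which $\theta$ does not vanish.
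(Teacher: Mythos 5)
Your proposal is correct and follows exactly the paper's route: the corollary is stated there as an immediate consequence of Theorem \ref{teo1}, using precisely the two facts you verify (type I implies unimodular, and the eigenvalue condition holds for every element, hence for any $A$ normalized so that $\theta(A)=1$). The only inaccuracy is your parenthetical claim that a symplectic structure ($\theta=0$) would be of the first kind --- under the paper's definition the Lee morphism would then be identically zero, hence of the second kind --- but this is immaterial since the standing convention (and the hypothesis $\theta(A)=1$ in Theorem \ref{teo1}) excludes $\theta=0$.
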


\medskip
We now exhibit an example of a $6$-dimensional Lie algebra not of type I, which admits LCS structures only of the first kind. Therefore this example shows that the converse of Corollary \ref{ImgPuras-1tipo} is not true. 
\begin{ejemplo}
	Let us consider the Lie algebra $\g$ with Lie bracket given by
	\[
	[e_2,e_3]=e_1, \quad \quad
	[e_2,e_5]=e_2,\]
	\[ [e_3,e_5]=-e_3, \quad \quad 
	[e_4,e_5]=e_1, \]
	in the basis $\{e_1,e_2,e_3,e_4,e_5,e_6\}$. Clearly $\g$ is not of type I because $\ad_{e_5}$ has real eigenvalues. Let $\{e^1,e^2,e^3,e^4,e^5,e^6\}$ be the dual basis. Then the differential $d: \g^* \to \alt^2\g^*$ is given by
	$$de^1=-e^{23}-e^{45}, \quad de^2=-e^{25}, \quad de^3=e^{35}, \quad de^4=de^5=de^6=0.$$
	It can be shown that 
	$$\left\{\begin{array}{l}
	\omega= e^{16}-e^{23}-e^{45}\\
	\theta=e^6
	\end{array}\right. $$
	is a LCS structure on $\g$. Moreover it is of the first kind, since $\omega=d_{e^6}(e^1)$.
	
	We show next that the Lie algebra $\g$ admits LCS structures only of the first kind. In order to prove this fact, we consider a generic closed $1$-form $\theta$ on $\g$, that is, $\theta=ae^4+be^5+ce^6$ for $a,b,c\in \R$. We suppose that $\theta$ is the Lee form for a LCS form $\omega$.
	
	If $a^2+c^2\neq0$ we define $A=\frac{1}{a^2+c^2}(ae^4+ce^6)$, then $\theta(A)=1$ and zero is the only eigenvalue of $\ad_A$. It follows from Theorem \ref{teo1} that 
	if $(\omega,\theta)$ is a LCS structure then it must be of the first kind.
	
	If $\theta=be^5$ it can be seen that $\omega$ is degenerate, therefore there is no LCS structure in this case.
\end{ejemplo}

\begin{obs}
	According to \cite{AO2}, any solvable unimodular Lie algebra admitting a Vaisman structure is a Lie algebra of type I. Therefore the LCS structure underlying the Vaisman structure is of the first kind due to Corollary \ref{ImgPuras-1tipo}. 
	Moreover, any LCK structure of the first kind on a solvable unimodular Lie algebra is Vaisman according to \cite{S2}.
\end{obs}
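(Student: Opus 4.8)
The plan is to read the remark as a chain of three linked assertions: the first and third are quoted from \cite{AO2} and \cite{S2}, while the middle one is an immediate application of Corollary \ref{ImgPuras-1tipo}. The task is to check that these link up correctly and to indicate how the cited inputs are established. Recall that a Vaisman structure on $\g$ is an LCK structure $(J,g)$ whose Lee form $\theta$ is parallel for the Levi-Civita connection of $g$; its fundamental $2$-form $\omega=g(J\cdot,\cdot)$ together with $\theta$ satisfies \eqref{lcs} and so is exactly the LCS structure underlying the Vaisman structure that the statement refers to.

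For the first assertion I would reconstruct the argument behind \cite{AO2}. The hypothesis $\nabla\theta=0$ says that the metric dual $V$ of $\theta$ is a parallel (hence Killing and geodesic) left-invariant vector field. Feeding $\nabla_XV=0$ into the Koszul formula for a left-invariant metric, $\nabla_XY=\tfrac12\bigl([X,Y]-\ad_X^*Y-\ad_Y^*X\bigr)$, and combining it with the compatibility $\omega(\cdot,\cdot)=g(J\cdot,\cdot)$ and the LCK equation $d\omega=\theta\wedge\omega$, one recovers the standard structural description of a Vaisman Lie algebra, in which $\g$ is assembled from a Sasakian Lie algebra by adjoining the vectors $V$ and $JV$. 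For solvable unimodular $\g$ this description forces the spectrum of every $\ad_X$ into $i\R$, that is, $\g$ is of type I; this is precisely the content of \cite{AO2}.

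The middle assertion is then free: knowing that $\g$ is of type I, Corollary \ref{ImgPuras-1tipo} applies verbatim to the underlying LCS pair $(\omega,\theta)$ and yields that it is of the first kind. No computation is required at this step, and this is the single point at which the paper's own result enters the remark.

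Finally, the third assertion is the partial converse recorded in \cite{S2}: on a solvable unimodular Lie algebra, an LCK structure of the first kind possesses an infinitesimal automorphism $X$ with $\theta(X)\neq0$, and together with unimodularity this rigidity forces $\theta$ to be parallel, so the structure is automatically Vaisman. The main obstacle in the whole remark is the first assertion, namely deriving type I from the Vaisman hypothesis, since it requires the full structural analysis of Vaisman solvable Lie algebras; once type I is in hand, the first-kind conclusion costs nothing beyond Corollary \ref{ImgPuras-1tipo}.
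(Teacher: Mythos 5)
Your proposal matches the paper exactly: the remark's only content is the three-link chain you identify --- quote the type~I property of Vaisman solvable unimodular Lie algebras from \cite{AO2}, apply Corollary \ref{ImgPuras-1tipo} to conclude the underlying LCS structure is of the first kind, and quote the converse for first-kind LCK structures from \cite{S2} --- and the paper, like you, leaves the two cited inputs to the references. Your heuristic reconstructions of the \cite{AO2} and \cite{S2} arguments go beyond what the paper provides but are not needed for the remark's validity, and the one step that is the paper's own (type~I implies first kind) is applied correctly.
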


\begin{obs}
	From Corollary \ref{tipoI-coho-trivial} we know that the Morse-Novikov cohomology for Lie algebra of type I admitting a LCS structure $(\omega, \theta)$ is trivial.
	This fact was known for a Vaisman Lie algebra, since there is an injection $i^*:H^*_\theta(\g)\to H^*_\theta(\Gamma\backslash G)$ (see \cite{K}) and the latter is trivial acording to \cite{LLMP}.	 
\end{obs}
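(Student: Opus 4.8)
The plan is to deduce this directly from Theorem \ref{millo}, so that the only real work is to manufacture an element $A$ satisfying the hypotheses of that theorem. First I would use the assumption $\theta \neq 0$ to select some $A_0 \in \g$ with $\theta(A_0) \neq 0$, and then rescale by setting $A = A_0/\theta(A_0)$, so that $\theta(A) = 1$. This normalization is the whole content of converting the abstract nonvanishing of $\theta$ into the concrete condition $\theta(A)=1$ demanded by Theorem \ref{millo}.

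Next I would invoke the definition of type I. By hypothesis every operator $\ad_X$ with $X \in \g$ has all its eigenvalues in $i\R$; specializing this to $X = A$ gives that $\ad_A$ has only imaginary eigenvalues. Thus the element $A$ meets both requirements of Theorem \ref{millo} simultaneously: $\theta(A) = 1$ and $\operatorname{Spec}(\ad_A) \subset i\R$. The $1$-form $\theta$ is closed by assumption, so the remaining hypothesis of the theorem is also in place.

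With $A$ in hand, Theorem \ref{millo} applies verbatim and yields $H^*_\theta(\g) = 0$, which is exactly the assertion. I expect no genuine obstacle here: the result is a corollary in the strict sense, because \emph{type I} is precisely the condition guaranteeing the spectral hypothesis of Theorem \ref{millo} for \emph{every} element of $\g$, and in particular for the normalized $A$. The only point deserving a moment's care is that the definition of type I quantifies over all $X \in \g$, so it is legitimate to specialize it to the single constructed element $A$; once that specialization is made, the conclusion is immediate and requires no further computation.
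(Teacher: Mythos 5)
Your proposal is correct and follows precisely the paper's own route: the Remark is simply an invocation of Corollary \ref{tipoI-coho-trivial}, which the paper obtains exactly as you do, by normalizing some $A_0\in\g$ with $\theta(A_0)\neq 0$ to get $\theta(A)=1$ and observing that the type I hypothesis, quantified over all of $\g$, specializes to give $\operatorname{Spec}(\ad_A)\subset i\R$, so Theorem \ref{millo} applies. The second sentence of the Remark, concerning Vaisman Lie algebras and the injection $i^*:H^*_\theta(\g)\to H^*_\theta(\Gamma\backslash G)$, is historical context citing \cite{K} and \cite{LLMP} and requires no proof, so your omission of it is harmless.
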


\begin{obs}\label{contact}
\Cref{ImgPuras-1tipo} generalizes the result in \cite{BM} which states that any LCS structure on a nilpotent Lie algebra is of the first kind, since every nilpotent Lie algebra is of type I. 
\end{obs}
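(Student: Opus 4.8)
The plan is to verify the only substantive assertion in the remark, namely the clause ``every nilpotent Lie algebra is of type I''. Once this inclusion of classes is in hand, the generalization claim follows formally: since Corollary \ref{ImgPuras-1tipo} asserts that any LCS structure on a type I Lie algebra is of the first kind, specializing it to a nilpotent Lie algebra reproduces verbatim the result of \cite{BM} that any LCS structure on a nilpotent Lie algebra is of the first kind. So I would first make this reduction explicit, isolating the spectral statement about nilpotent Lie algebras as the single thing requiring proof.

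To establish that statement I would recall the standard fact that in a nilpotent Lie algebra $\g$ the operator $\ad_X$ is a nilpotent endomorphism for every $X\in\g$. Concretely, writing the lower central series $\g=\g^{(1)}\supseteq\g^{(2)}\supseteq\cdots$ with $\g^{(i+1)}=[\g,\g^{(i)}]$ and letting $k$ be the nilpotency class so that $\g^{(k+1)}=0$, one has $\ad_X(\g^{(i)})=[X,\g^{(i)}]\subseteq\g^{(i+1)}$ for each $i$. Iterating this inclusion gives $(\ad_X)^k(\g)\subseteq\g^{(k+1)}=0$, hence $(\ad_X)^k=0$ for every $X\in\g$. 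Consequently the characteristic polynomial of $\ad_X$ is $\lambda^{\dim\g}$, so its only eigenvalue is $0$. Since $0\in i\R$, all eigenvalues of $\ad_X$ lie in $i\R$ for every $X\in\g$, which is precisely the defining condition for $\g$ to be of type I.

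I do not anticipate a genuine obstacle here: the whole content collapses to the elementary observation that nilpotency of $\g$ forces each $\ad_X$ to be nilpotent, with spectrum $\{0\}\subseteq i\R$. The only point I would take care to phrase cleanly is the logical reduction in the first paragraph, so that the word ``generalizes'' is justified as a formal specialization of Corollary \ref{ImgPuras-1tipo} rather than an independent argument, and so that the reader sees the nilpotent case of \cite{BM} emerging as an immediate corollary of the type I result.
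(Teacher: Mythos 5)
Your proposal is correct and follows exactly the paper's (implicit) reasoning: the remark rests solely on the observation, stated without proof in the preliminaries, that every nilpotent Lie algebra is of type I, after which the generalization is a formal specialization of Corollary \ref{ImgPuras-1tipo}. Your lower-central-series argument showing $(\ad_X)^k=0$, hence $\operatorname{Spec}(\ad_X)=\{0\}\subseteq i\R$, is precisely the standard fact the paper takes for granted, so there is no divergence in approach.
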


\smallskip

We can see that Proposition \ref{lcs_contact} and Proposition \ref{lcs_symplectic} can be adapted for Lie algebras of type I. 

\begin{thm}\label{lcs_contact_I}
	There is a one to one correspondence between $(2n+1)$-dimensional contact Lie algebras of type I $(\h,\eta)$ endowed with a derivation $D$ such that $\eta\circ D=0$ and $D$ has only imaginary eigenvalues and $(2n+2)$-dimensional Lie algebras of type I endowed with a LCS structure of the first kind $(\g,\omega,\theta)$. The relation is given by $\h=\ker\theta$, $\omega=d_\theta \eta$ and $D=\ad_U$.
\end{thm}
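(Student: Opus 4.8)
The plan is to build on Proposition~\ref{lcs_contact}, which already gives the bijection between contact Lie algebras $(\h,\eta)$ equipped with a derivation $D$ satisfying $\eta\circ D=0$ and LCS Lie algebras of the first kind $(\g,\omega,\theta)$, with no reference to type I. Since the assignments $\h=\ker\theta$, $\omega=d_\theta\eta$, $D=\ad_U$ are already mutually inverse there, all that remains is to check that the two type-I decorations correspond to one another. I first record the relevant structure: because $\theta$ is closed we have $[\g,\g]\subseteq\ker\theta$, so $\ker\theta$ is a codimension-one ideal, $\g=\R U\oplus\ker\theta$ (with $\theta(U)=1$, since $\theta(U)=\eta(V)=1$ for the Lee vector $V$), and $D=\ad_U|_{\ker\theta}$. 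Hence the theorem reduces to proving
\[ \g \text{ is of type I} \iff \ker\theta \text{ is of type I and } D \text{ has all its eigenvalues in } i\R. \]

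The forward implication is straightforward. As $\ker\theta$ is a subalgebra of $\g$, it is of type I by the subalgebra-closure property recalled in Section~2. Moreover, since $\ker\theta$ is an ideal and $\ad_U$ sends $U$ into $\ker\theta$, the operator $\ad_U$ on $\g$ is block triangular with diagonal blocks $0$ on $\g/\ker\theta$ and $D$ on $\ker\theta$; thus $\operatorname{Spec}(\ad_U)=\{0\}\cup\operatorname{Spec}(D)$, and type I of $\g$ forces $\operatorname{Spec}(D)\subseteq i\R$.

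The converse is the heart of the matter, and the step I expect to be the main obstacle: I must show that the semidirect product $\g=\R U\ltimes_D\ker\theta$ is of type I as soon as $\ker\theta$ is and $D$ has imaginary spectrum. This is not a formal consequence of the hypotheses, because a sum of two operators each with purely imaginary spectrum can have real eigenvalues, while for $Z=aU+W$ one only controls the summands $aD$ and $\ad_W|_{\ker\theta}$ separately. The tool that resolves this is the weight decomposition of the adjoint representation. I would first reduce to the solvable case: a Levi factor of $\g$ is forced into $[\g,\g]\subseteq\ker\theta$, so it is a Levi factor of $\ker\theta$ and hence compact (as $\ker\theta$ is of type I), while the radical of $\g$ equals $\R U\ltimes_D\operatorname{rad}(\ker\theta)$, with $\operatorname{rad}(\ker\theta)$ of type I and $D$-invariant (the radical is characteristic); by the Levi criterion for type I stated in Section~2 it then suffices to treat this solvable radical. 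Assuming $\g$ solvable, Lie's theorem simultaneously triangularizes $\ad(\g_{\C})$ and produces weights $\lambda_1,\dots,\lambda_n\in(\g^*)_{\C}$ with $\operatorname{Spec}(\ad_Z)=\{\lambda_j(Z)\}_j$; type I is equivalent to $\lambda_j(\g)\subseteq i\R$ for every $j$. Now each $\ad_Z$ with $Z\in\ker\theta$ acts trivially on $\g/\ker\theta$, so its eigenvalues are those of $\ad_Z|_{\ker\theta}$ together with $0$, all in $i\R$; this gives $\lambda_j(\ker\theta)\subseteq i\R$. Combined with $\lambda_j(U)\in\operatorname{Spec}(\ad_U)\subseteq i\R$ from the previous paragraph, linearity of $\lambda_j$ yields $\lambda_j(aU+W)=a\lambda_j(U)+\lambda_j(W)\in i\R$ for all $a\in\R$ and $W\in\ker\theta$. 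Hence every $\ad_Z$ has imaginary spectrum and $\g$ is of type I.

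Putting the two implications together with Proposition~\ref{lcs_contact} shows that the type-I-decorated assignment is a bijection, which is the assertion of the theorem. The points I expect to require the most care are the reduction to the solvable case (verifying that $D$ preserves $\operatorname{rad}(\ker\theta)$ and that $\R U\ltimes_D\operatorname{rad}(\ker\theta)$ is exactly the radical of $\g$) and the bookkeeping of eigenvalues of $\ad_Z$ across the ideal $\ker\theta$ and the quotient $\g/\ker\theta$.
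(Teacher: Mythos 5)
Your overall strategy is exactly the paper's: everything is funneled through Proposition~\ref{lcs_contact}, so the theorem reduces to showing that $\g$ is of type I if and only if $\ker\theta$ is of type I and $D=\ad_U$ has purely imaginary spectrum. Your first implication coincides with the paper's argument (subalgebra-closure plus the block form of $\ad_U$). For the other implication the paper says only that ``it is easy to see that $\g$ is a Lie algebra of type I'', so your weight-theoretic argument via Lie's theorem --- the eigenvalues of $\ad_Z$ on a solvable algebra are linear functionals $\lambda_j(Z)$, and $\lambda_j(\ker\theta)\subseteq i\R$ together with $\lambda_j(U)\in i\R$ forces $\lambda_j(\g)\subseteq i\R$ --- is a correct and genuinely useful filling-in of the step the paper skips; you are also right that this step is not formal, since a sum of operators with imaginary spectrum can have real eigenvalues. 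In the solvable setting, which is the context of Section 3 and of all the paper's applications, your proof is complete and more rigorous than the published one.

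The gap is in your reduction of the non-solvable case. The claim $\operatorname{rad}(\g)=\R U\ltimes_D\operatorname{rad}(\ker\theta)$ is false in general: $\R U\oplus\operatorname{rad}(\ker\theta)$ need not even be an ideal of $\g$, because $[\h,U]=D(\h)$ need not lie in $\operatorname{rad}(\h)$. A concrete instance satisfying all hypotheses of the theorem is $\h=\mathfrak{su}(2)$ (compact, hence of type I, and contact) with $\eta=e^1$ and $D=\ad_{e_1}$, so that $\eta\circ D=0$ and $\operatorname{Spec}(D)=\{0,\pm i\}$: here $\operatorname{rad}(\ker\theta)=0$, yet $\R U$ is not an ideal of $\g=\R U\ltimes_{\ad_{e_1}}\mathfrak{su}(2)$, whose radical is in fact the central line $\R(U-e_1)$. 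In general one must replace $U$ by $U'=U-s_0$, where $s_0$ lies in the Levi factor $\mathfrak s$ and $\ad_{\bar s_0}$ equals the derivation induced by $D$ on $\mathfrak s\cong\h/\operatorname{rad}(\h)$ (such $s_0$ exists because all derivations of $\mathfrak s$ are inner); then $\operatorname{rad}(\g)=\R U'\ltimes\operatorname{rad}(\ker\theta)$. But the derivation you must feed into your solvable-case argument is then $\bigl(D-\ad_{s_0}\bigr)\big|_{\operatorname{rad}(\ker\theta)}$, and the hypothesis that $D$ has imaginary spectrum does not formally transfer to $D-\ad_{s_0}$ --- this is precisely the ``sum of two imaginary-spectrum operators'' trap you yourself flagged at the outset. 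So for non-solvable $\h$, which the statement of the theorem does not exclude and which does occur, your argument as written does not close: it still requires a proof that $\operatorname{Spec}\bigl((D-\ad_{s_0})|_{\operatorname{rad}(\h)}\bigr)\subseteq i\R$. If the theorem is read, as the section heading suggests, with all algebras solvable, this objection disappears and your proof stands.
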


\begin{proof}
	If $\h$ is a $(2n+1)$-dimensional contact Lie algebra of type I endowed with a derivation $D$ such that $D$ has only imaginary eigenvalues, then it follows from Proposition \ref{lcs_contact} that $\g=\R\ltimes_D\h$ carries a LCS structure of the first kind, and it is easy to see that $\g$ is a Lie algebra of type I.
	
	Conversely, if $\g$ is a Lie algebra of type I, then every subalgebra of $\g$ is of type I, in particular $\ker\theta$. Moreover $D=\ad_U$ has only imaginary eigenvalues. Now the result follows from Proposition \ref{lcs_contact}.
\end{proof}

\smallskip

\begin{thm}\label{lcs_symplectic_I}
	There is a one to one correspondence between $(2n+2)$-dimensional Lie algebras of type I admitting a LCS structure of the first kind with central Lee vector and symplectic Lie algebras of type I $(\mathfrak{s}, \beta)$ of dimension $2n$ endowed with a derivation $E$ such that $\beta(EX,Y)+\beta(X,EY)=0$ for all $X,Y\in\mathfrak s$ and $E$ has only imaginary eigenvalues.
\end{thm}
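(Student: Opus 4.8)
The plan is to prove Theorem~\ref{lcs_symplectic_I} by building on the existing correspondence of Proposition~\ref{lcs_symplectic} and simply tracking the type~I condition across the bijection in both directions. Recall from the discussion following Proposition~\ref{lcs_symplectic} that a $(2n+2)$-dimensional Lie algebra $\g$ with a first-kind LCS structure with central Lee vector is exactly the double extension $\g=\R U\oplus\R V\oplus\mathfrak s$ of a symplectic Lie algebra $(\mathfrak s,\beta)$ by a symplectic derivation $E=\ad_U|_{\mathfrak s}$, with bracket $[X,Y]=\beta(X,Y)V+[X,Y]_{\mathfrak s}$ and $[U,X]=EX$. So the only content to add is the equivalence: $\g$ is of type~I if and only if $\mathfrak s$ is of type~I and $E$ has only imaginary eigenvalues.

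\emph{Forward direction.} Suppose $\g$ is of type~I. Since any subalgebra of a type~I Lie algebra is again of type~I (one of the listed properties of type~I algebras), $\mathfrak s=\ker\eta$ inherits type~I. For the derivation $E$, I would compute $\ad_U$ on $\g$ in the decomposition $\R U\oplus\R V\oplus\mathfrak s$: since $V$ is central, $\ad_U V=0$, and $\ad_U U=0$, while $\ad_U X=EX$ for $X\in\mathfrak s$. Thus $\ad_U$ is block-triangular with blocks $0$ (on $\R U\oplus\R V$) and $E$ (on $\mathfrak s$), so the eigenvalues of $\ad_U$ on $\g$ are exactly $\{0\}\cup\operatorname{Spec}(E)$. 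Because $\g$ is of type~I, $\ad_U$ has only imaginary eigenvalues, hence so does $E$.

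\emph{Converse direction.} Suppose $\mathfrak s$ is of type~I and $E$ has only imaginary eigenvalues. I must show the double extension $\g$ is of type~I, i.e.\ $\ad_W$ has only imaginary eigenvalues for \emph{every} $W=\alpha U+\gamma V+X\in\g$, not merely for $W=U$. Writing $\ad_W$ in the same decomposition, the central $V$ contributes a zero row/column, and the remaining action is governed by $\alpha E+\ad_X^{\mathfrak s}$ acting on $\mathfrak s$ (up to the $\beta$-term feeding into $V$, which only adds to the already-degenerate $V$-direction and does not affect the spectrum away from $0$). The key computation is that the eigenvalues of $\ad_W$ on $\g$ reduce to $0$ together with the eigenvalues of $\alpha E+\ad_X^{\mathfrak s}$ on $\mathfrak s$.

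\emph{Main obstacle.} The delicate point is precisely this last step: knowing that $E$ and each $\ad_X^{\mathfrak s}$ separately have imaginary spectrum does \emph{not} automatically guarantee that the combination $\alpha E+\ad_X^{\mathfrak s}$ does, since sums of operators with imaginary spectra need not have imaginary spectrum. The cleanest way around this is to observe that $\alpha E+\ad_X^{\mathfrak s}=\ad_{\alpha U+X}|_{\mathfrak s}$ is, up to the image in $\R V$, just the restriction to $\mathfrak s$ of $\ad$ of the element $\alpha U+X$ viewed inside the \emph{type~I} algebra structure one is trying to establish---so a direct appeal would be circular. Instead I would argue that $\mathfrak s\rtimes_E\R U$ (the codimension-one subalgebra $\ker\eta\oplus\R U$, or equivalently $\g/\R V$) is itself a Lie algebra whose type~I-ness is what genuinely needs checking, and for this one uses the hypothesis structurally: the simply connected group of type~I is characterized by $\operatorname{Ad}_g$ having all eigenvalues of modulus $1$, and the assumption that $E$ has imaginary eigenvalues together with $\mathfrak s$ being type~I should be shown to propagate to the one-dimensional extension. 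I expect that verifying imaginary spectrum of $\ad_W$ for the general element $W$ (the sum problem) is the real work, and I would either invoke the structural characterization of type~I algebras from \cite{OnVi,AHG} for such extensions or reduce to it, rather than attempting a brute-force eigenvalue estimate.
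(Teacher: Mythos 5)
You follow the same strategy as the paper: quote Proposition \ref{lcs_symplectic} for the bijection itself and reduce the theorem to the equivalence ``$\g$ of type I $\Leftrightarrow$ $\mathfrak s$ of type I and $\operatorname{Spec}(E)\subset i\R$'', attacked via block-matrix spectral computations in the decomposition $\g=\R U\oplus\R V\oplus\mathfrak s$. Your spectral identity $\operatorname{Spec}(\ad_W)=\{0\}\cup\operatorname{Spec}(\alpha E+\ad_X^{\mathfrak s})$ for $W=\alpha U+\gamma V+X$ is correct, and the forward implication is essentially fine. The genuine gap is in the converse: you stop exactly at what you call the main obstacle --- showing that $\alpha E+\ad_X^{\mathfrak s}$ has purely imaginary spectrum --- and in place of an argument you offer only the intention to ``invoke the structural characterization \dots or reduce to it''. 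That unproven step is the entire content of the converse, so as written your proposal does not prove the theorem. (You have, however, put your finger on a real issue: the paper's own proof displays $\ad_X^{\g}$ only for $X\in\mathfrak s$, records $\operatorname{Spec}(\ad_X^{\mathfrak s})\cup\{0\}=\operatorname{Spec}(\ad_X^{\g})$, and immediately concludes; mixed elements $\alpha U+X$ are never treated there either.)

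The step can be closed, and the key word is linearity. Since $\mathfrak s$ is of type I it is unimodular, and a unimodular symplectic Lie algebra is solvable (a theorem of Lichnerowicz and Medina; cf.\ \cite{MR1}), so $\ker\theta=\R V\oplus\mathfrak s$ and hence $\g=\R U\ltimes\ker\theta$ are solvable. Lie's theorem applied to the complexified adjoint representation triangularizes all the operators $\ad_W$ simultaneously, with diagonal entries $\lambda_1(W),\dots,\lambda_{2n+2}(W)$, where each weight $\lambda_i\colon\g\to\C$ is \emph{linear} in $W$. Type I means $\operatorname{Re}\lambda_i=0$ for every $i$; your hypotheses give $\operatorname{Re}\lambda_i(U)=0$ (from $\operatorname{Spec}(\ad_U)=\{0\}\cup\operatorname{Spec}(E)$), $\operatorname{Re}\lambda_i(V)=0$ (since $\ad_V=0$), and $\operatorname{Re}\lambda_i(X)=0$ for all $X\in\mathfrak s$ (from your identity with $\alpha=0$), and linearity then forces $\operatorname{Re}\lambda_i\equiv0$ on $\g$. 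This is precisely what defuses the ``sum of imaginary-spectrum operators'' worry: the eigenvalues, as functions of $W$, are linear, so vanishing of their real parts on a spanning set suffices. One last correction to your forward direction: $(\mathfrak s,[\cdot,\cdot]_{\mathfrak s})$ is \emph{not} a subalgebra of $\g$ --- the $\g$-bracket of $X,Y\in\mathfrak s$ carries the extra term $\beta(X,Y)V$ --- it is the quotient $\ker\theta/\R V$; so replace the appeal to the hereditary property of subalgebras by the (equally true) statement for quotients by ideals, or simply read $\operatorname{Spec}(\ad_X^{\mathfrak s})\subset\operatorname{Spec}(\ad_X^{\g})$ off the block matrix, as the paper does.
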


\begin{proof}
	According to Proposition \ref{lcs_symplectic} we only need to verify that the Lie algebra $\g$ is of type I if and only $\mathfrak s$ is of type I and the derivation $E$ has only imaginary eigenvalues. We can write $\g=\R U\oplus\R V\oplus\mathfrak s$ and the relation between the Lie bracket of $\g$ and the Lie bracket of $\mathfrak s$ is given by
	$[X,Y]=\beta(X,Y)V + [X,Y]_{\mathfrak s}$ and $[U,X]=EX$, for all $X,Y\in\mathfrak s$, where $\beta$ is the symplectic form on $\mathfrak s$. Let $\{e_1,\dots,e_n\}$ a basis of $\mathfrak s$, then we can write
	\[\ad_X^\g=\left(\begin{array}{c|c|ccc}
	0&0&&0&\\
	\hline
	0&0&&w&\\
	\hline
	&&&&\\
	z^t&0&&\ad_X^\mathfrak s\\
	&&&&
	\end{array}\right)\]
	in the basis $\{U,V,e_1,\dots,e_n\}$ of $\g$ , where $z=-EX$, $w=(w_1,\dots,w_n)$ satisfies $w_i=\beta(X,e_i)$ and $\ad_X^\mathfrak s$ represents the adjoint action of $X$ in $(\mathfrak s,[X,Y]_{\mathfrak s})$. It is clear that  $\text{Spec}(\ad_X^\mathfrak s)\cup\{0\}=\text{Spec}(\ad_X^\g)$. Therefore $\g$ is of type I if and only $\mathfrak s$ is of type I and the derivation $E$ has only imaginary eigenvalues.
\end{proof}


\begin{obs}\label{symplectica&unimodularNoExacta}
	Note that the symplectic forms in the correspondence of Theorem \ref{lcs_symplectic_I} are not exact. Indeed, a unimodular Lie algebra does not admit exact symplectic forms (see \cite{Kru}).
\end{obs}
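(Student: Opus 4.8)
The plan is to reduce the assertion to the cited result of \cite{Kru}, so that the only thing left to check is that the symplectic Lie algebra $\mathfrak{s}$ occurring in the correspondence of Theorem \ref{lcs_symplectic_I} is unimodular. First I would observe that, by the very statement of Theorem \ref{lcs_symplectic_I}, the Lie algebra $\mathfrak{s}$ is of type I. Among the properties of Lie algebras of type I recorded in the preliminaries, the first one is precisely that a Lie algebra of type I is unimodular. Hence $(\mathfrak{s},\beta)$ is a unimodular symplectic Lie algebra.

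Next I would invoke \cite{Kru}, according to which a unimodular Lie algebra cannot carry an exact symplectic form. Applying this directly to $(\mathfrak{s},\beta)$ yields that $\beta$ is not exact, which is exactly the content of the remark. I expect no genuine obstacle here: once unimodularity of $\mathfrak{s}$ is in hand, the conclusion is immediate from the cited result, and the whole point of the remark is just to flag this compatibility within the double extension picture.

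If instead one wished to establish the \cite{Kru} statement from scratch, the key idea would be a Poincar\'e duality argument. Writing $\dim\mathfrak{s}=2n$, suppose $\beta=d\alpha$ for some $\alpha\in\mathfrak{s}^*$. Then
\[
\beta^{n}=(d\alpha)^{n}=d\bigl(\alpha\wedge(d\alpha)^{n-1}\bigr)
\]
is an exact top-degree form on $\mathfrak{s}$. But for a unimodular Lie algebra one has $H^{2n}(\mathfrak{s})\cong\R$, generated by any nonzero element of $\alt^{2n}\mathfrak{s}^*$, so no nonzero top-degree form can be exact. Since $\beta$ is nondegenerate, $\beta^{n}\neq0$, giving a contradiction and forcing $\beta$ to be non-exact. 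This is the hard part of the cited theorem, but in the present context it may simply be quoted from \cite{Kru}.
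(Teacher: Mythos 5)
Your proposal is correct and follows the paper's own argument exactly: the Lie algebra $\mathfrak{s}$ in Theorem \ref{lcs_symplectic_I} is of type I, hence unimodular, and the cited result of \cite{Kru} then forbids $\beta$ from being exact. Your additional sketch of the \cite{Kru} statement (that $\beta^{n}=d\bigl(\alpha\wedge(d\alpha)^{n-1}\bigr)$ would be a nonzero exact top-degree form, contradicting the fact that for a unimodular Lie algebra no nonzero element of $\alt^{2n}\mathfrak{s}^*$ is exact) is also sound, though the paper simply quotes the reference.
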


\begin{obs}
	Theorem \ref{lcs_symplectic_I} generalizes the result in \cite{BM} where a relation between $(2n+2)$-dimensional nilpotent Lie algebras admitting a LCS structure and nilpotent Lie algebras with a symplectic structure and a symplectic nilpotent derivation was established.
\end{obs}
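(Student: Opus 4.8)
The plan is to read ``generalizes'' as the assertion that the bijection of Theorem \ref{lcs_symplectic_I}, once restricted to nilpotent objects on both sides, reproduces exactly the correspondence of \cite{BM}. First I would observe that the nilpotent situation genuinely sits inside the type I situation: every nilpotent Lie algebra is of type I, and by \cite{BM} every LCS structure on a nilpotent Lie algebra is automatically of the first kind with central Lee vector, so such structures already belong to the domain of Theorem \ref{lcs_symplectic_I}. On the other side, a nilpotent symplectic derivation $E$ has all its eigenvalues equal to $0\in i\R$, so the hypothesis ``$E$ has only imaginary eigenvalues'' is vacuous in the nilpotent case, and a nilpotent symplectic Lie algebra $\mathfrak{s}$ is automatically of type I. Hence the only thing left to verify is that the nilpotency conditions match up under the double extension, i.e. that $\g$ is nilpotent if and only if $\mathfrak{s}$ is nilpotent and $E$ is nilpotent.

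For the forward implication I would use that $\h=\ker\theta$ is an ideal of $\g$ (since $d\theta=0$ forces $[\g,\g]\subseteq\ker\theta$) containing the central Lee vector $V$, and that $\mathfrak{s}\cong\h/\R V$ as Lie algebras with bracket $[\cdot,\cdot]_{\mathfrak{s}}$. If $\g$ is nilpotent, then so are its ideal $\h$ and the quotient $\h/\R V\cong\mathfrak{s}$; moreover $\mathfrak{s}$ is an $\ad_U$-invariant subspace with $\ad_U|_{\mathfrak{s}}=E$, so $E$ is the restriction of the nilpotent operator $\ad_U$ to an invariant subspace and is therefore nilpotent.

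For the converse I would first note that $\h=\R V\oplus\mathfrak{s}$ is a central extension of $(\mathfrak{s},[\cdot,\cdot]_{\mathfrak{s}})$ by the central ideal $\R V$ with cocycle $\beta$; since a central extension of a nilpotent Lie algebra is again nilpotent, $\h$ is nilpotent whenever $\mathfrak{s}$ is. Then $\g=\R U\ltimes_{\ad_U}\h$, where $\ad_U|_{\h}=\operatorname{diag}(0,E)$ on $\R V\oplus\mathfrak{s}$ is a nilpotent derivation of $\h$. The conclusion then follows from the lemma that a one-dimensional extension of a nilpotent Lie algebra by a nilpotent derivation is again nilpotent. Combining the two implications shows that the correspondence of Theorem \ref{lcs_symplectic_I} restricts to a bijection between nilpotent LCS Lie algebras and nilpotent symplectic Lie algebras equipped with a nilpotent symplectic derivation, which is precisely the statement of \cite{BM}.

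I expect this last lemma to be the main obstacle. To prove it, by Engel's theorem it suffices to show that each $\ad_Y$ is nilpotent, and since $[\g,\g]\subseteq\h$ one reduces to the nilpotency of $\ad_Y|_{\h}=a\,(\ad_U|_\h)+\ad_X^{\h}$ for $Y=aU+X$. The subtlety is that this is a sum of two nilpotent derivations of $\h$, and such sums need not be nilpotent in general; I would overcome this by choosing a filtration of $\h$ that refines its lower central series and is compatible with $\ad_U|_\h$, so that the inner part $\ad_X^{\h}$ strictly lowers the lower-central degree while $\ad_U|_\h$ strictly lowers the refining index. With respect to such a filtration both operators, and hence their combination, are strictly triangular and therefore nilpotent, which completes the argument.
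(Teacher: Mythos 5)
Your proposal is correct, and in fact it supplies considerably more than the paper itself: in the paper this statement is a remark given with no proof at all, its implicit justification being only the two immediate observations that every nilpotent Lie algebra is of type I and that a nilpotent derivation has all eigenvalues equal to $0\in i\R$, i.e.\ the weak reading that the nilpotent setting of \cite{BM} lies inside the domain of Theorem \ref{lcs_symplectic_I}. You prove the strong reading --- that the bijection of Theorem \ref{lcs_symplectic_I} restricts on the nose to the correspondence of \cite{BM} --- and this genuinely requires the nilpotency-matching you carry out, which neither the remark nor the proof of Theorem \ref{lcs_symplectic_I} addresses (that proof only tracks spectra, via $\text{Spec}(\ad_X^\mathfrak{s})\cup\{0\}=\text{Spec}(\ad_X^\g)$, which sees type I but not nilpotency, since an operator can have spectrum $\{0\}$ on every $\ad_X$ without $\g$ being nilpotent in any single step of an argument --- one must control the whole lower central series). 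Your forward direction is sound: $\ker\theta$ is an ideal, $\mathfrak{s}\cong\ker\theta/\R V$ because the Lee vector is central (central by \cite{BM} in the nilpotent case), and $E$ is the restriction of the nilpotent $\ad_U$ to an invariant subspace (the invariance $\eta([U,X])=0$ is implicit in Proposition \ref{lcs_symplectic}; it follows from $\mathrm{L}_U\omega=0$ applied with $Y=U$, since $[U,U]=0$). Your converse correctly splits the double extension as a central extension $\R V\oplus\mathfrak{s}$ followed by $\R U\ltimes_{D}\h$, and you rightly identify the one genuinely delicate point: $\ad_{aU+X}|_{\h}=a\,D+\ad_X^{\h}$ is a sum of two nilpotent operators, which in general need not be nilpotent. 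Your fix --- refining the lower central series of $\h$ by the kernels of the powers of the operator induced by $D$ on each layer $\h^{(i)}/\h^{(i+1)}$ (legitimate since a derivation preserves the lower central series), so that $\ad_X^{\h}$ drops the central degree while $D$ drops the refining index, making both summands strictly triangular for the same flag --- is exactly the right argument, and Engel's theorem then closes the loop. In short: the paper asserts, you verify; and the verification is complete and correct, with the filtration lemma being the real mathematical content that the remark leaves unsaid.
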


\medskip

We show an application of Theorem \ref{lcs_symplectic_I}. Let us start with a particular case of symplectic Lie algebras of type I, namely the K\"ahler flat Lie algebras, that is, a flat Lie algebra $(\mathfrak s, \pint)$ with a complex structure $J$ such that $(\mathfrak s,J, \pint)$ is K\"ahler where the fundamental form $\omega$ is determined by $(J, \pint)$ (see \cite{BDF,Mi} for a nice description of flat Lie algebras). In particular it is proved in \cite{Mi} that a Lie algebra $(\mathfrak s, \pint)$ is flat if and only if $\mathfrak s$ splits as an orthogonal direct sum $\mathfrak b\oplus\mathfrak u$ where $\mathfrak b$ is an abelian subalgebra, $\mathfrak u$ is an abelian ideal and the linear map $\ad_X$ is skew-symmetric for all $X\in\mathfrak b$. This fact implies that any flat Lie algebra is a Lie algebra of type I.

We consider next $D$ a skew-symmetric derivation of $\mathfrak s$ which commutes with the complex structure, it means that $D$ is a symplectic derivation. Then it follows from Theorem \ref{lcs_symplectic_I} that the Lie algebra $\g$ which is the double extension of $(\mathfrak s,\omega)$ by the derivation $D$ admits a LCS structure of the first kind. 
Moreover according to \cite{AO2} this LCS structure arises from a Vaisman structure on $\g$.

\medskip

It is also possible to take a symplectic derivation of a K\"ahler flat Lie algebra $\mathfrak s$ which is not skew-symmetric, and using Theorem \ref{lcs_symplectic_I} we obtain a Lie algebra of type I with a LCS structure which does not arise from a Vaisman structure, according to \cite{AO2}. For example we start with the K\"ahler flat Lie algebra $\mathfrak s=\mathfrak r'_{3,0}\times\R$ with structure equation $(0,-13,12,0)$ and symplectic form $\omega=-e^{14}+e^{23}$. We consider the Lie algebra $\g$ given by the double extension of $\mathfrak s$ by $\omega$ and the following derivation 
$$D=\begin{pmatrix}
0&0&0&0\\
0&0&-b&0\\
0&b&0&0\\
a&0&0&0
\end{pmatrix},$$
for $a,b\in\R$ and $a\neq0$, in the basis $\{e_1,e_2,e_3,e_4\}$ of $\mathfrak s$. Then $\g=\R e_6\ltimes_D(\R e_5\oplus_\omega\mathfrak s)$ is a $6$-dimensional Lie algebra of type I admitting a LCS structure $\Omega=-e^{14}+e^{23}-e^{56}$ with Lee form $\theta=e^6$, which is not LCK.

\


For a nilpotent Lie algebra with a LCS structure is shown in \cite[Theorem 5.15]{BM} that the Lee vector is a central vector and therefore they can be obtained as a double extension of a symplectic nilpotent Lie algebra with a suitable derivation. 

For a Lie algebra of type I the Lee vector is not necessarily in the center of the Lie algebra. In the next subsection we exhibit an example of a non-nilpotent $4$-dimensional Lie algebra of type I admitting a LCS structure with non central Lee vector, which is then not covered by Proposition \ref{lcs_symplectic_I} and the LCS structure does not arise from a symplectic Lie algebra (see Example \ref{Ex.r'30xR} below).


\medskip

\subsection{Dimension $4$}

In this section we determine all $4$-dimensional solvable Lie algebras of type I. 
For the sake of completeness, we exhibit the classification of LCS structures on $4$-dimensional solvable Lie algebras of type I.
For a complete classification of LCS structures on $4$-dimensional Lie algebras see \cite{ABP}.

Let us recall the notation of \cite{ABDO} for some Lie algebras in low dimension in the next table. Note that $\mathfrak r_{3,-1}$ is the Lie algebra $\mathfrak e(1,1)$ of the group of rigid motions of Minkowski $2$-space and $\mathfrak r'_{3,0}$ is the Lie algebra $\mathfrak e(2)$ of the group of rigid motions of Euclidean $2$-space.

\begin{table}[H]

	\def\arraystretch{1.4}
	
			\begin{tabular}{l|l|l}
				Lie algebra &  Lie bracket & Salamon notation \\
				\toprule 
				\hline
				
				$\mathfrak {aff}(\R)$ & $[e_1, e_2] = e_2$ & (0,-12)\\
				\hline
				
				$\h_3$ & $[e_1, e_2] = e_3$ & (0,0,-12)\\
				\hline
				
				$\mathfrak r_{3,-1}$ &  $[e_1, e_2] = e_2$, $[e_1, e_3] = -e_3$ & (0,-12,-13)\\			
				\hline
				
				$\mathfrak r'_{3,0}$ &  $[e_1, e_2] = -e_3$, $[e_1, e_3] = e_2$ & (0,-13,12)\\
				
				\hline
				
				$\mathfrak n_4$ & $[e_1, e_4] = -e_2$,	$[e_2,e_4]=-e_3$ & (0,14,24,0)\\
				
				\hline
				
				$\mathfrak d'_{4,0}$ &  $[e_1, e_2] = e_3$, $[e_1, e_4] = e_2$,	$[e_2,e_4]=-e_1$ & (24,-14,-12,0)\\
				
				\hline					
				
				\bottomrule
			\end{tabular}
\end{table}

\begin{prop}\label{4-LieAlgI}
	If $\g$ is a $4$-dimensional solvable Lie algebra of type I then $\g$ is isomorphic to one of the following: $\mathfrak r'_{3,0}\times\R$, $\mathfrak h_3\times\R$, $\mathfrak{d}'_{4,0}$ or $\mathfrak n_4$.
\end{prop}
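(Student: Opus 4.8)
The plan is to classify all $4$-dimensional solvable Lie algebras of type I by working through the possible structures according to the dimension of the commutator subalgebra $[\g,\g]$ and the derived series. Since $\g$ is solvable and of type I, it is unimodular, and every subalgebra is again of type I. The key structural constraint is that for every $X\in\g$ the operator $\ad_X$ has only imaginary eigenvalues; in particular $\g$ contains no element $X$ for which $\ad_X$ has a nonzero real eigenvalue, which immediately rules out algebras like $\mathfrak{aff}(\R)$, $\mathfrak r_{3,-1}$ and their products with $\R$. I would organize the argument by first observing that a $4$-dimensional solvable Lie algebra has a derived algebra $\n=[\g,\g]$ that is nilpotent, and that $\dim\n\in\{0,1,2,3\}$. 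The case $\dim\n=0$ gives the abelian algebra $\R^4$ (which is $\mathfrak r'_{3,0}\times\R$'s degeneration but abelian; one should note whether $\R^4$ counts — typically one lists it among nilpotent type I algebras, but the statement restricts to the four named nonabelian ones, so I would either include $\R^4$ explicitly or note the convention that the classification is up to the listed representatives).

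First I would dispose of the nilpotent case: a $4$-dimensional nilpotent Lie algebra is automatically of type I, and the classification of $4$-dimensional nilpotent Lie algebras yields exactly $\R^4$, $\h_3\times\R$, and $\mathfrak n_4$ (the filiform algebra). This accounts for the nilpotent members of the list. The substantive work is then the non-nilpotent solvable case, where $\g$ has a nontrivial semisimple part in its adjoint action. Here I would use the standard structure theory: write $\g=\R A\ltimes\n$ where $\n$ is the nilradical (of dimension $3$, since a $4$-dimensional non-nilpotent solvable algebra has $3$-dimensional nilradical) and $A\notin\n$. The type I condition forces $\ad_A|_\n$ to have purely imaginary eigenvalues. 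Since $\n$ is a $3$-dimensional nilpotent Lie algebra (so $\n$ is $\R^3$ or $\h_3$), I would analyze the possible semisimple derivations $D=\ad_A|_\n$ with imaginary spectrum that are actual derivations of $\n$, and check unimodularity of the resulting $\g$.

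The main obstacle, and where I expect the real casework to live, is in enumerating the admissible derivations $D$ of the $3$-dimensional nilradical with purely imaginary spectrum, up to isomorphism and rescaling. When $\n=\R^3$, a semisimple $D$ with imaginary eigenvalues is conjugate to a block $\begin{pmatrix} 0 & -b & 0 \\ b & 0 & 0 \\ 0 & 0 & 0\end{pmatrix}$ (a single rotation block plus a zero, the zero forced by the fact that a real $3\times 3$ matrix with purely imaginary spectrum must have $0$ as an eigenvalue); this produces $\mathfrak r'_{3,0}\times\R = \R A\ltimes\R^3$, which is of type I. When $\n=\h_3$, I would compute the derivation algebra of $\h_3$, impose purely imaginary spectrum together with the constraint that $\g$ remain unimodular ($\tr\ad_A=0$, automatic for type I but a useful consistency check), and identify the resulting algebra as $\mathfrak d'_{4,0}$; any derivation with a nonzero real eigenvalue would reintroduce real spectrum and is discarded. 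The delicate points are (i) verifying that two a priori different choices of rotation speed $b$ give isomorphic Lie algebras (so that one does not overcount a continuous family), and (ii) confirming that no other conjugacy class of imaginary-spectrum derivation yields a genuinely new algebra.

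Throughout I would lean on the fact that type I is equivalent to the absence of real eigenvalues in any $\ad_X$, not merely in $\ad_A$, so after constructing each candidate I must check the condition for all $X\in\g$, not just the distinguished $A$; however, since $\ad_X$ for $X\in\n$ is nilpotent (hence has only the eigenvalue $0$) and the eigenvalues vary continuously and are constrained by the semisimple part, this reduces to the condition on the single generator in each case. Assembling the nilpotent list $\{\R^4,\ \h_3\times\R,\ \mathfrak n_4\}$ with the non-nilpotent type I algebras $\{\mathfrak r'_{3,0}\times\R,\ \mathfrak d'_{4,0}\}$ and matching against the named representatives gives exactly the four algebras in the statement.
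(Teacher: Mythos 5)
Your strategy is sound in outline and genuinely different from the paper's: the paper invokes the classification of Andrada--Barberis--Dotti--Ovando, which writes \emph{every} $4$-dimensional solvable Lie algebra as $\R\ltimes_D\u$ with $\u$ a $3$-dimensional \emph{unimodular} ideal (so $\u$ may also be $\mathfrak r_{3,-1}$ or $\mathfrak r'_{3,0}$, cases the paper then disposes of separately), whereas you split off the nilpotent case and decompose the rest over the nilradical, so that only $\R^3$ and $\h_3$ ever appear. However, your decomposition rests on a step that is false as you stated it: it is \emph{not} true that every $4$-dimensional non-nilpotent solvable Lie algebra has $3$-dimensional nilradical. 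The algebra $\mathfrak{aff}(\R)\oplus\mathfrak{aff}(\R)$ is solvable, non-nilpotent, and its nilradical is the $2$-dimensional abelian ideal spanned by the two translation generators. Since your entire non-nilpotent analysis presupposes $\g=\R A\ltimes\n$ with $\dim\n=3$, as written the proof could in principle miss type I algebras with $2$-dimensional nilradical; that there are none is a fact that needs the type I hypothesis and an argument, not a general dimension count.

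The gap is fixable, but the argument must be supplied. If $\dim\n\le 1$ then $\dim[\g,\g]\le 1$, and either $\g$ is nilpotent or some $\ad_X$ has a nonzero real eigenvalue, contradicting type I. If $\dim\n=2$ then $\n\cong\R^2$ is abelian; pick $X,Y$ spanning a complement. Since $[\g,\g]\subseteq\n$ and $\n$ is abelian, $\ad_X|_\n$ and $\ad_Y|_\n$ are \emph{commuting} elements of $\mathfrak{gl}(2,\R)$, and type I forces every combination $\ad_{aX+bY}|_\n$ to have purely imaginary spectrum. But in $\mathfrak{gl}(2,\R)$ the centralizer of a matrix with eigenvalues $\pm ib$, $b\neq 0$, is $\R\I+\R J$ (where $J$ is the rotation generator), and the centralizer of a nonzero nilpotent $N$ is $\R\I+\R N$; in both cases the elements with purely imaginary spectrum form a single line. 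Hence $\ad_X|_\n$ and $\ad_Y|_\n$ are linearly dependent, so some nonzero $Z\notin\n$ satisfies $\ad_Z|_\n=0$, and then $\R Z+\n$ is a $3$-dimensional abelian ideal, contradicting maximality of the nilradical. With this lemma your case analysis does recover the list (up to the normalizations you correctly flag as delicate, and up to the abelian $\R^4$, which the paper also excludes parenthetically). One smaller point: your appeal to continuity of eigenvalues to reduce the type I condition to the single generator $A$ is not a proof; the correct and easy reason is that for $X=tA+N$ with $N\in\n$, the operator $\ad_X$ preserves $\n$ and vanishes on $\g/\n$, while $\ad_X|_\n=tD+\ad_N|_\n$ is block lower triangular with the same diagonal blocks as $tD$, so its eigenvalues are those of $tD$ together with $0$.
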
 

\begin{proof}
	According to \cite{ABDO} any $4$-dimensional solvable real Lie algebra $\g$ is a semidirect product of $\R$ with a $3$-dimensional unimodular ideal. Then $\g$ can be written as $\g=\R\ltimes_D\u$ where $\u$ is an ideal of $\g$ isomorphic to either $\R^3$, $\h_3$, $\mathfrak r_{3,-1}$ or $\mathfrak r'_{3,0}$ and $D$ is a derivation of $\u$.
	
	If $\u=\h_3$ then according to \cite{ABDO} a generic derivation of $\u$ has the form
    $$D=\left(\begin{array}{cc|c}
	a&c&\\
	b&d&\\
	\hline
	x&y&0 
    \end{array}\right),$$ 
    with respect to the basis $\{e_1,e_2,e_3\}$ such that $[e_1,e_2] = e_3$.
    We may assume $x=y=0$ after a change of basis.
    It is easy to see that $\g$ is of type I if and only if $\det(A)\geq0$ and $\tr(A)=0$ where 
    $A=\begin{pmatrix}
    a&c\\
    b&d	
    \end{pmatrix}$. We can assume that $A$ is in its Jordan form, then $A$ takes the form $A=0$ or  
    $A=\begin{pmatrix}
    0&1\\
    0&0	
    \end{pmatrix}$ or 
    $A=\begin{pmatrix}
    0&-1\\
    1&0	
    \end{pmatrix}$.
    Therefore $\g$ is isomorphic to either $\mathfrak h_3\times\R$ or $\mathfrak n_4$ or $\mathfrak{d}'_{4,0}$.

    If $\u=\mathfrak r_{3,-1}$ then $\g$ is not of type I.
    
    If $\u=\mathfrak r'_{3,0}$ then according to \cite{ABDO} a generic derivation of $\u$ has the form
    $$D=\left(\begin{array}{ccc}
    0&0&0\\
    c&a&-b\\
    d&b&a 
    \end{array}\right),$$ 
    with respect to the basis $\{e_1,e_2,e_3\}$ such that $[e_1,e_2] = e_3$ and $[e_1,e_3] = -e_2$. It is easy to see that in this case $\g$ is of type I if and only if $a=0$, it may be shown that $\g=\mathfrak r'_{3,0}\times\R$ according to \cite{ABDO}.
    
    If $\u=\R^3$, according to \cite{ABDO} we may assume that $D$ is one of the following: 
    $$D=\left(\begin{array}{ccc}
    a&0&0\\
    0&b&0\\
    0&0&c 
    \end{array}\right), \quad 
   D=\left(\begin{array}{ccc}
    	a&0&0\\
    	0&b&1\\
    	0&0&b 
    \end{array}\right), \quad
    D=\left(\begin{array}{ccc}
    	a&1&0\\
    	0&a&1\\
    	0&0&a 
    \end{array}\right), \quad
    D=\left(\begin{array}{ccc}
    	a&0&0\\
    	0&b&-1\\
    	0&1&b 
    \end{array}\right).$$
    It can be shown that $\g=\R\ltimes_D \R^3$ is of type I if and only if either $D=0$ or
  $$
  D=\left(\begin{array}{ccc}
  0&0&0\\
  0&0&1\\
  0&0&0 
  \end{array}\right), \quad
  D=\left(\begin{array}{ccc}
  0&1&0\\
  0&0&1\\
  0&0&0 
  \end{array}\right), \quad
  D=\left(\begin{array}{ccc}
  0&0&0\\
  0&0&-1\\
  0&1&0 
  \end{array}\right).$$
  Therefore (except for the abelian Lie algebra) $\g$ is isomorphic to $\mathfrak h_3\times\R$, or $\mathfrak n_4$, or $\mathfrak r'_{3,0}\times\R$.
 \end{proof}

We exhibit now LCS structures on the non nilpotent Lie algebras obtained in Proposition \ref{4-LieAlgI}, namely $\mathfrak r'_{3,0}\times\R$ and $\mathfrak{d}'_{4,0}$. The nilpotent ones were studied in \cite{BM}.

\begin{ejemplo}\label{Ex.r'30xR}
	We consider first the $4$-dimensional Lie algebra $\mathfrak r'_{3,0}\times\R$ with structure equations $(0,-13,12,0)$. Then $\mathfrak r'_{3,0}\times\R$ admits a basis $\{e_1,e_2,e_3,e_4\}$ such that the only no zero Lie bracket are $[e_1,e_2]=e_3$ and $[e_1,e_3]=-e_2$. The adjoint action of $e_1$ is 
$\ad_{e_1}=\begin{pmatrix}
0&-1\\
1&0
\end{pmatrix}$, 
therefore it is a Lie algebra of type I, but not nilpotent. 
It was shown in \cite{AO1} that the Lie algebra $\mathfrak r'_{3,0}\times\R$ admits LCS structures only of the first kind. This fact can be seen as a direct consequence of \Cref{ImgPuras-1tipo}. It follows from \cite{ABP} that the generic LCS structure for this Lie algebra is 
\[\omega=e^{13} - e^{24}, \quad \theta=e_4,\]
where $\{e^1,e^2,e^3,e^4\}$ is the dual basis of $\{e_1,e_2,e_3,e_4\}$.
Note that the Lee vector for this LCS structure is $e_1$ and it is not a central vector, therefore this example is not covered by Theorem \ref{lcs_symplectic_I}. It is known that the simply connected Lie group associated to this Lie algebra admits lattices, therefore the corresponding solvmanifold has a LCS structure of the first kind. This solvmanifold admits a complex structure and as a compact complex surface it is a Inoue surface of type $S^0$.
This example was our motivation to study Lie algebras of type I with LCS structures.
\end{ejemplo}

\medskip

\begin{ejemplo}

We consider now the Lie algebra denoted by $\mathfrak{d}'_{4,0}$ with structure equations $(24,-14,-12,0)$. Then $\mathfrak{d}'_{4,0}$ admits a basis $\{e_1,e_2,e_3,e_4\}$ with Lie bracket given by $[e_1,e_2]=e_3$, $[e_1,e_4]=e_2$ and $[e_2,e_4]=-e_1$.
We determine next the LCS forms in $\mathfrak{d}'_{4,0}$.
It is easy to see that a LCS structure $(\omega, \theta)$ in $\mathfrak{d}'_{4,0}$ is given by
\[\omega=a_1e^{14}+ a_2e^{24} + a_3e^{34} + a_3e^{12}, \quad \theta=e_4,\]
where $\{e^1,e^2,e^3,e^4\}$ is the dual basis of $\{e_1,e_2,e_3,e_4\}$, for some $a_1,a_2,a_3\in\R$ and $a_3\neq0$. It follows from \cite{ABP} that these LCS structures on $\mathfrak{d}'_{4,0}$ are equivalent, up to automorphisms of the Lie algebra, to one of the following:
\[\omega= e^{12} - ce^{34}, \quad \theta=ce^4, \quad\quad c>0.\]
It follows from \Cref{ImgPuras-1tipo} that all these LCS structures are of the first kind. It is also easy to verify this fact from the definition. It is well known that the corresponding simply connected Lie group $G$ associated to the Lie algebra $\mathfrak{d}'_{4,0}$ admits lattices. 
In fact, in \cite{AO2} three families of lattices in $G$, namely $\Lambda_{k,\frac{\pi}{2}}, \Lambda_{k,\pi}$ and $\Lambda_{k,2\pi}$ with $k\in\mathbb N$, were exhibited. It was also shown that the solvmanifolds $\Lambda_{k,\frac{\pi}{2}}\backslash G$ and $\Lambda_{k,\pi}\backslash G$ are secondary Kodaira surfaces, while $\Lambda_{k,2\pi}\backslash G$ is diffeomorphic to a nilmanifold $\Gamma\backslash \R\times H_3$, which is a primary Kodaira surface.
It can be seen that the LCS $2$-form induced by $\omega$ on $\Gamma\backslash \R\times H_3$ is not invariant for the nilpotent group $\R\times H_3$.

\end{ejemplo}


We summarize the Lie algebras of type I in dimension $4$ and we exhibit a LCS structure for each of them in the next table.



\begin{table}[H]
	\def\arraystretch{1.4}
	\centering
			\begin{tabular}{l|l|l|c}
				Lie algebra &  structure equations & LCS structure & nilpotent \\
				\toprule 
				\hline
				
                $\h_3\times\R$ & $(0,0,-12,0)$ & $\omega=e^{12}-e^{34}, \quad \theta=e^4$ &$\checkmark$ \\
                \hline
                
				$\mathfrak n_4$ &  	$(0,14,24,0)$ & $\omega=e^{13}-e^{24}, \quad \theta= e^1$  & $\checkmark $  \\
			
				\hline
				
				$\mathfrak r'_{3,0}\times\R$ & $(0,-13,12,0)$ & $\omega=e^{12}+e^{13}-e^{24}, \quad \theta= e^4$ & $\times$ \\
				
				\hline
				
				$\mathfrak d'_{4,0}$ & $(24,-14,-12,0)$ & $\omega=e^{12}-e^{34}, \quad\theta= e^4$  & $\times$ \\

				\hline					
	
				\bottomrule
			\end{tabular}
	\caption{$4$-dimensional Lie algebras of type I.}
\end{table}

\begin{obs}
In dimension $4$ we have a converse of Corollary \ref{ImgPuras-1tipo}, that is, any solvable Lie algebra admitting LCS structures only of the first kind are Lie algebras of type I. The proof of this fact follows directly from the classification of LCS structures in $4$-dimensional Lie algebras given in \cite{ABP}. 
\end{obs}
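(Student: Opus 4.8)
The plan is to deduce the statement from the complete classification of LCS structures on $4$-dimensional Lie algebras carried out in \cite{ABP}, which records, for every $4$-dimensional Lie algebra admitting an LCS structure, a normal form for each of its LCS structures together with the information of whether it is of the first or of the second kind. The forward implication is already contained in \Cref{ImgPuras-1tipo}: if $\g$ is of type I then every LCS structure on $\g$ is of the first kind. Hence the real content of the remark is the reverse inclusion. Since a Lie algebra of type I is in particular unimodular, and since \Cref{4-LieAlgI} gives the complete list $\mathfrak r'_{3,0}\times\R$, $\h_3\times\R$, $\mathfrak d'_{4,0}$, $\mathfrak n_4$ of $4$-dimensional solvable Lie algebras of type I, the task reduces to a finite inspection of the \cite{ABP} list.

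Concretely, I would proceed in two steps. First, from the table above each of the four algebras $\mathfrak r'_{3,0}\times\R$, $\h_3\times\R$, $\mathfrak d'_{4,0}$, $\mathfrak n_4$ admits an LCS structure, and by \Cref{ImgPuras-1tipo} all of its LCS structures are of the first kind; thus the solvable type I algebras do belong to the class described in the remark. Second, I would run through the remaining entries of the classification, i.e. the $4$-dimensional solvable Lie algebras that carry an LCS structure but are not isomorphic to one of these four. For each such $\g$ the classification exhibits at least one LCS structure of the second kind, so $\g$ fails the hypothesis of the remark. Comparing the two lists then shows that the $4$-dimensional solvable Lie algebras all of whose LCS structures are of the first kind are precisely the type I ones, which is exactly the desired converse.

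The argument is purely a case-by-case verification and carries essentially no conceptual content beyond the external classification; the only point requiring genuine care is to confirm that every non-type-I solvable algebra in the \cite{ABP} list really carries a second-kind structure. It is not enough that some of its LCS structures be of the second kind: one must rule out that all of them are of the first kind. In particular I would pay special attention to the non-unimodular algebras, where \Cref{1º_exacta} no longer applies and the equivalence between being of the first kind and being exact breaks down, so that the first/second kind distinction has to be read off directly from the Lee morphism in each normal form supplied by \cite{ABP}. This bookkeeping, ensuring completeness of the comparison across all non-type-I algebras, is the main (though routine) obstacle.
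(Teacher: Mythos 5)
Your proposal takes essentially the same route as the paper: the paper's entire argument is the appeal to the classification of LCS structures on $4$-dimensional Lie algebras in \cite{ABP}, combined (implicitly) with the list of type I algebras from Proposition \ref{4-LieAlgI}, exactly the finite inspection you describe. Your additional point of care --- that for each non-type-I solvable algebra carrying an LCS structure one must exhibit a structure of the \emph{second} kind, reading the Lee morphism directly off the normal forms since Proposition \ref{1º_exacta} is unavailable in the non-unimodular case --- is precisely the bookkeeping the paper leaves implicit, so the proposal is correct and faithful to the intended proof.
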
	

\

\subsection{LCS Lie algebras of type I in dimension $6$}

Recall that $6$-dimensional nilpotent Lie algebras admitting a LCS structure were classified in \cite{BM}. In this section we focus on Lie algebras of type I, which are not nilpotent, admitting a LCS structure. 
As we mentioned in Theorem \ref{lcs_contact_I} there is a one to one correspondence between $2n$-dimensional Lie algebras of type I with a LCS structure and $(2n-1)$-dimensional contact Lie algebras of type I endowed with a suitable derivation.

First we study $5$-dimensional Lie algebras of type I with a contact structure. We recall first that $5$-dimensional Lie algebras with a contact structure were classified in \cite{Di}. We summarize this classification:
\begin{enumerate}[(i)]
	\item If $\g$ is solvable and non-decomposable there are 24 non-isomorphic $5$-dimensional Lie algebras admitting a contact structure (see \cite[Section $5$]{Di}). After some computations one can verify that the only solvable non-decomposable $5$-dimensional Lie algebras of type I admitting a contact $1$-form $\eta$ are: 
	\begin{enumerate}[(1)]
		\item  $[e_2, e_4] = e_1$, $[e_3, e_5] = e_1$, $\eta= e^1$
		\item  $[e_3, e_4] = e_1$, $[e_2, e_5] = e_1$, $[e_3, e_5] = e_2$, $\eta= e^1$
		\item  $[e_3, e_4] = e_1$, $[e_2, e_5] = e_1$, $[e_3, e_5] = e_2$, $[e_4, e_5] = e_3$, $\eta= e^1$
	    \item  $[e_2, e_3] = e_1$, $[e_2, e_5] = e_3$, $[e_3, e_5]=−e_2$, $[e_4, e_5] = e_1 $, $\eta= e^1$.
    \end{enumerate}

\bigskip

   \item If $\g$ is solvable and decomposable we have two options:
     \begin{enumerate}[(a)]
	   \item $\g=\mathfrak{aff}(\R)\oplus\h$ with $\h$ a $3$-dimensional Lie algebra different from $\R\ltimes_{\I} \R^2$,
	   \item $\g=\R\oplus\h$ where $\h$ is a $4$-dimensional Lie algebra with a exact symplectic form.
     \end{enumerate}
    In the first case $\g=\mathfrak{aff}(\R)\oplus\h$ is not of the type I because $\mathfrak{aff}(\R)$ is not of type I. 
    In the second case, $\g=\R\oplus\h$ cannot be unimodular according to Remark \ref{symplectica&unimodularNoExacta}, in particular it cannot be of type I.
 
\bigskip

\item If $\g$ is non-solvable, then $\g$ is one of the following: $\mathfrak{aff}(\R)\oplus\mathfrak{sl}(2)$, $\mathfrak{aff}(\R)\oplus\mathfrak{so}(3)$, or a semidirect product $\mathfrak{sl}(2)\ltimes\R^2$.
But it is clear that $\mathfrak{aff}$ and $\mathfrak{sl}(2)$ are not of type I, therefore $\g$ is not of type I.
Therefore we obtain the following result. 
\end{enumerate}

\begin{cor}
	If $\g$ is a $5$-dimensional Lie algebra of type I admitting a contact structure, then $\g$ is solvable and non-decomposable and it is isomorphic to one of the Lie algebras in Table \ref{contacto_typeI}.
\end{cor}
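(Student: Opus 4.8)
The plan is to build the corollary directly on the trichotomy furnished by Diatta's classification \cite{Di} of $5$-dimensional contact Lie algebras, which partitions them into the solvable non-decomposable, the solvable decomposable, and the non-solvable families. These are exactly the three cases (i), (ii) and (iii) enumerated above, so the corollary will be established once I show that the type I hypothesis eliminates every algebra in families (ii) and (iii), leaving only the four solvable non-decomposable algebras of case (i) that are collected in Table \ref{contacto_typeI}.

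For the decomposable family (ii) I would treat its two sub-cases separately. If $\g=\mathfrak{aff}(\R)\oplus\h$, then $\mathfrak{aff}(\R)$ sits inside $\g$ as a subalgebra; since any subalgebra of a type I algebra is again of type I, and $\ad$ on $\mathfrak{aff}(\R)$ has the real nonzero eigenvalue $1$, this factor fails to be of type I, and hence so does $\g$. If instead $\g=\R\oplus\h$ with $\h$ carrying an exact symplectic form, then by Remark \ref{symplectica&unimodularNoExacta} the algebra cannot be unimodular; as every type I algebra is unimodular, this excludes $\g$. This disposes of case (ii).

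For the non-solvable family (iii), each of the three possibilities $\mathfrak{aff}(\R)\oplus\mathfrak{sl}(2)$, $\mathfrak{aff}(\R)\oplus\mathfrak{so}(3)$ and $\mathfrak{sl}(2)\ltimes\R^2$ contains either $\mathfrak{aff}(\R)$ or $\mathfrak{sl}(2)$ as a subalgebra. Neither of these is of type I: the first as above, and $\mathfrak{sl}(2)$ because, being its own noncompact semisimple Levi factor, it admits elements $X$ with $\ad_X$ having real nonzero eigenvalues. By the subalgebra property, none of these ambient algebras is of type I, so case (iii) is excluded.

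The main obstacle, and the only genuinely computational step, is case (i): among Diatta's $24$ solvable non-decomposable $5$-dimensional contact Lie algebras I must single out precisely those of type I. For each candidate I would write $\ad_X$ in the given basis for a general $X$ and impose that its characteristic polynomial have only purely imaginary roots for every $X$; carrying out this eigenvalue analysis eliminates all but the four algebras (1)--(4), each with contact form $\eta=e^1$. Assembling these four algebras into Table \ref{contacto_typeI}, and noting that they are by construction solvable and non-decomposable, completes the proof.
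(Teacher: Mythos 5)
Your proposal is correct and follows essentially the same route as the paper: both run through Diatta's trichotomy, exclude the decomposable case via the $\mathfrak{aff}(\R)$ factor and Remark \ref{symplectica&unimodularNoExacta} (type I $\Rightarrow$ unimodular, yet exact symplectic forces non-unimodularity), exclude the non-solvable case via the non-type-I subalgebras $\mathfrak{aff}(\R)$ and $\mathfrak{sl}(2)$, and reduce the non-decomposable solvable case to an eigenvalue check on the $24$ algebras, which yields exactly the four entries of Table \ref{contacto_typeI}. The only difference is cosmetic: the paper, like you, leaves the case-by-case eigenvalue computation to the reader.
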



\begin{table}[H]
	\def\arraystretch{1.4}
	\centering
	{\resizebox{\textwidth}{!}{
			\begin{tabular}{l|l|l|c}
				Lie algebra &  Lie bracket & contact structure & nilpotent \\
				\toprule 
				\hline
				
				$\h_5$ & $[e_2, e_4] = e_1$, $[e_3, e_5] = e_1$ & $\eta= e^1$  &$\checkmark$ \\
				\hline
				
				$\mathfrak n_1$ &  $[e_3, e_4] = e_1$, $[e_2, e_5] = e_1$, $[e_3, e_5] = e_2$ & $\eta= e^1$  & $\checkmark $  \\
				
				\hline
				
				$\mathfrak n_2$ & $[e_3, e_4] = e_1$, $[e_2, e_5] = e_1$, $[e_3, e_5] = e_2$, $[e_4, e_5] = e_3$ & $\eta= e^1$ & $\checkmark$ \\
				
				\hline
				
				$\mathfrak h$ &  $[e_2,e_3]=e_1$, $[e_2,e_5]=e_3$,	$[e_3,e_5]=-e_2$, $[e_4,e_5]=e_1$ & $\eta= e^1$  & $\times$ \\
				
				\hline					
				
				\bottomrule
			\end{tabular}
	}}
	\caption{$5$-dimensional Lie algebras of type I with a contact structure.}
	\label{contacto_typeI}
\end{table}

\begin{obs}
	For each Lie algebra in Table \ref{contacto_typeI}, it is possible to show that $\eta=e^1$ is the only contact form, up to automorphism of Lie algebras.
\end{obs}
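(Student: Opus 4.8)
The plan is to handle the four algebras $\h_5,\mathfrak n_1,\mathfrak n_2,\mathfrak h$ of Table \ref{contacto_typeI} in parallel as far as possible, splitting the task into ``which $1$-forms are contact'' and ``the automorphism group acts transitively on them''. First I would pin down the contact $1$-forms explicitly. Writing a general $\eta=\sum_{i=1}^5 a_i e^i$ and reading off $d\eta$ from the structure equations, one checks in each of the four cases that all but one of the wedge products of two distinct summands of $d\eta$ vanish (because of a repeated factor among $e^3,e^4,e^5$), the surviving one being the term producing $e^{2345}$ with coefficient $a_1^2$. Hence $(d\eta)^2$ is a nonzero multiple of $a_1^2\,e^{2345}$ and $\eta\wedge(d\eta)^2$ is a nonzero multiple of $a_1^3\,e^{12345}$, so that $\eta$ is contact if and only if $a_1\neq 0$. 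Conceptually this is forced by the remark preceding Proposition \ref{lcs_symplectic}: in each case $\z(\h)=\mathrm{span}\{e_1\}$ is one-dimensional, so the Reeb vector of any contact form lies in $\mathrm{span}\{e_1\}$, whence $\eta(e_1)=a_1\neq 0$. In particular $e^1$ is contact, and it remains to show that every $\eta$ with $a_1\neq 0$ is carried to $e^1$ by some automorphism of $\h$.

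Since automorphisms preserve the contact condition, it suffices to produce, for each such $\eta$, an automorphism $\psi$ with $\psi^*e^1=\eta$, that is, with the $e_1$-component of $\psi(e_j)$ equal to $a_j$ for every $j$. I would first use a diagonal automorphism coming from a positive grading of $\h$ (for $\h_5$ one may take $\mathrm{wt}(e_1)=2$ and $\mathrm{wt}(e_i)=1$ for $i\ge 2$, and each of the other three algebras is graded as well) to rescale the $e^1$-coefficient, together with an elementary sign-change automorphism (for $\h_5$, $e_1,e_4,e_5\mapsto -e_1,-e_4,-e_5$) to fix its sign; so one may assume $a_1=1$. The remaining coefficients are then removed by a unipotent automorphism of the form $\psi(e_1)=e_1$ and $\psi(e_j)=e_j+a_j e_1+(\text{corrections})$. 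The naive choice with no corrections already works for $\h_5$, where $e_1$ is central and every bracket outputs a multiple of $e_1$; but for $\mathfrak n_1,\mathfrak n_2,\mathfrak h$ a relation whose output is not $e_1$, such as $[e_3,e_5]=e_2$, is not respected by the bare central translation, since $[\psi(e_3),\psi(e_5)]=[e_3,e_5]=e_2$ fails to match $\psi(e_2)=e_2+a_2e_1$.

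The main obstacle is precisely this mismatch, and the device that resolves it is to feed a suitable lower term into the top generator $e_5$. In $\mathfrak n_1$ one takes $\psi(e_5)=e_5+a_5e_1+a_2e_4$, using $[e_3,e_4]=e_1$ to supply the missing $a_2e_1$ in $[\psi(e_3),\psi(e_5)]$, and one verifies that the relations $[e_3,e_4]=e_1$ and $[e_2,e_5]=e_1$ remain intact because $[e_2,e_4]=0$. The same idea, applied twice, handles $\mathfrak n_2$ via $\psi(e_5)=e_5+a_5e_1+a_2e_4-a_3e_3$ and the solvable algebra $\mathfrak h$ via $\psi(e_5)=e_5+a_5e_1+a_3e_3+a_2e_2$, using $[e_2,e_3]=e_1$. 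The delicacy is that each correction introduced to fix one relation must be checked against all remaining structure equations, the point being that it drops out of them precisely because the relevant cross-brackets vanish; this is a finite but careful verification for each of the four algebras. Once these corrected automorphisms are in hand, $\mathrm{Aut}(\h)$ acts transitively on $\{a_1\neq 0\}$, and together with the contact criterion $a_1\neq 0$ this yields that $\eta=e^1$ is the unique contact form up to automorphism.
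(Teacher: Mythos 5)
The paper never proves this remark: it is stated as ``it is possible to show'' and no argument is given, so there is no proof of record to compare yours against --- your write-up supplies the missing details, and it is correct. I checked both halves. For the contact criterion, in each of the four algebras of Table \ref{contacto_typeI} every wedge product of two summands of $d\eta$ other than the square of the $de^1$-part contains a repeated factor, so $(d\eta)^2=\pm 2a_1^2\,e^{2345}$ and $\eta\wedge(d\eta)^2=\pm 2a_1^3\,e^{12345}$; hence $\eta=\sum_i a_ie^i$ is contact exactly when $a_1\neq 0$. For transitivity, your corrected unipotent automorphisms do satisfy all structure equations: e.g.\ for $\n_2$, $[\psi(e_4),\psi(e_5)]=[e_4,e_5]-a_3[e_4,e_3]=e_3+a_3e_1=\psi(e_3)$, and for $\h$, $[\psi(e_3),\psi(e_5)]=-e_2-a_2e_1=-\psi(e_2)=\psi([e_3,e_5])$, the cross-brackets $[e_2,e_4]$, $[e_3,e_4]$, $[e_4,e_2]$, $[e_4,e_3]$ vanishing as needed. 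Two small corrections, neither fatal. First, $\h$ is solvable and non-nilpotent, so it cannot admit a \emph{positive} grading (a Lie algebra graded by positive integers is nilpotent); what exists is a grading with weights $(2,1,1,2,0)$, whose diagonal automorphisms rescale $e^1$ only by $t^2>0$. Second, as a consequence, for $\h$ (as for $\h_5$, whose weights $(2,1,1,1,1)$ also give $t^2$) you genuinely need a sign-change automorphism, which you assert only for $\h_5$; for $\h$ one can take $e_1\mapsto -e_1$, $e_2\mapsto e_2$, $e_3\mapsto -e_3$, $e_4\mapsto e_4$, $e_5\mapsto -e_5$. For $\n_1$ and $\n_2$ no sign change is needed, since the gradings $(3,2,1,2,1)$ and $(5,4,3,2,1)$ rescale $e^1$ by $t^3$ and $t^5$. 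With these amendments your argument is a complete proof of the remark.
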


The Lie algebras $\h_5$, $\mathfrak n_1$ and $\mathfrak n_2$ in Table \ref{contacto_typeI} are nilpotent, where $\h_5$ is the $5$-dimensional Heisenberg Lie algebra. The Lie algebra $\h$ from Table \ref{contacto_typeI} is the only $5$-dimensional unimodular solvable non nilpotent Lie algebra admitting a Sasakian structure according to \cite{AFV}. According to \cite{AO2} any $(2n-1)$-dimensional unimodular solvable Lie algebra $\mathfrak s$ admitting a Sasakian structure has non trivial center generated by the Reeb vector.
Moreover, this Lie algebra $\mathfrak s$ is a subalgebra of a Vaisman Lie algebra $\g$ of dimension $2n$, and according to \cite{AO2} this Lie algebra $\g$ is of type I, and therefore $\mathfrak s$ is a Lie algebra of type I as well. 

\begin{cor}
	Any unimodular solvable Lie algebra admitting a Sasakian structure is a Lie algebra of type I.
\end{cor}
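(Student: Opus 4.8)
The plan is to deduce this corollary by embedding the given Lie algebra into a Vaisman Lie algebra and then invoking the closure of the type I condition under passing to subalgebras. Suppose $\mathfrak s$ is a unimodular solvable Lie algebra admitting a Sasakian structure; since a Sasakian structure lives on an odd-dimensional space, we may write $\dim\mathfrak s = 2n-1$.

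First I would appeal to \cite{AO2}, where it is shown that every $(2n-1)$-dimensional unimodular solvable Lie algebra carrying a Sasakian structure has non-trivial center spanned by the Reeb vector, and, more importantly for us, that such an $\mathfrak s$ sits as a subalgebra of a $2n$-dimensional Vaisman Lie algebra $\g$. Concretely, one realizes $\g$ as a suitable one-dimensional extension of $\mathfrak s$ whose Hermitian data restrict to the Sasakian data on $\mathfrak s$; the verification that the resulting structure on $\g$ is genuinely Vaisman is precisely the content of the construction in \cite{AO2}, so I would cite it rather than reproduce it.

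Next, because $\g$ is a solvable unimodular Lie algebra equipped with a Vaisman structure, the result of \cite{AO2} recalled in the remark above (any solvable unimodular Lie algebra admitting a Vaisman structure is of type I) shows that $\g$ is of type I. Finally, I would use the elementary stability property listed among the basic facts about Lie algebras of type I, namely that any subalgebra of a Lie algebra of type I is again of type I: since $\mathfrak s\subset\g$ and $\g$ is of type I, it follows that $\mathfrak s$ is of type I, which is the assertion.

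The argument is essentially a chain of citations combined with one structural input, so I expect no computational obstacle. The only genuine content -- and the step a reader will most want spelled out -- is the embedding $\mathfrak s\hookrightarrow\g$ into a Vaisman Lie algebra; all of the Sasakian-to-Vaisman bookkeeping is packaged in \cite{AO2}, and once that embedding is in hand the conclusion is immediate from the subalgebra-closure of the type I condition.
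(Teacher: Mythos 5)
Your proof is correct and follows essentially the same route as the paper: cite \cite{AO2} for the embedding of $\mathfrak s$ as a subalgebra of a $2n$-dimensional Vaisman Lie algebra $\g$, use \cite{AO2} again to conclude $\g$ is of type I, and finish with the fact that subalgebras of type I Lie algebras are of type I. No gaps; this matches the paper's argument.
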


\

We next determine all the derivations for each Lie algebra in Table \ref{contacto_typeI}. Computations have been performed with the help of Maple. Any derivation $D$ of each Lie algebra can be written in the basis $\{e_1,e_2,e_3,e_4,e_5\}$ as follows

$$\h_5: D=\begin{pmatrix}
d_{11}&d_{12}&d_{13}&d_{14}&d_{15}\\
0&d_{22}&d_{23}&d_{24}&d_{25}\\
0&d_{32}&d_{33}&d_{25}&d_{35}\\
0&d_{42}&d_{43}&d_{11}-d_{22}&-d_{32}\\
0&d_{43}&d_{53}&-d_{23}&d_{11}-d_{33}
\end{pmatrix},$$
  
$$\mathfrak n_1:D=\begin{pmatrix}
d_{11}&d_{12}&d_{13}&d_{14}&d_{15}\\
0&d_{22}&d_{23}&d_{24}&d_{25}\\
0&0&2d_{22}-d_{11}&0&d_{24}\\
0&0&d_{43}&2d_{11}-2d_{22}&d_{12}-d_{23}\\
0&0&0&0&d_{11}-d_{22}
\end{pmatrix},$$

$$\mathfrak n_2: D=\begin{pmatrix}
5d_{55}&d_{12}&d_{13}&d_{14}&d_{15}\\
0&4d_{55}&d_{23}&d_{24}&d_{25}\\
0&0&3d_{55}&d_{23}&d_{24}-d_{13}\\
0&0&0&2d_{55}&d_{12}-d_{23}\\
0&0&0&0&d_{55}
\end{pmatrix},$$
  
$$\h:D=\begin{pmatrix}
2d_{22}&d_{12}&d_{13}&d_{14}&d_{15}\\
0&d_{22}&d_{23}&0&d_{12}\\
0&-d_{23}&d_{22}&0&d_{13}\\
0&0&0&2d_{22}&d_{45}\\
0&0&0&0&0
\end{pmatrix},$$
where $d_{ij}\in\R$ for $1\leq i,j\leq 5$.

We use the derivations above to build explicit examples of Lie algebras of type I in dimension $6$ equipped with a LCS structure.

\subsubsection{Case $\n_1$}
We consider the nilpotent Lie algebra $\n_1$ with a general derivation $D$. It can be seen that the eigenvalues of $D$ are $\{d_{11}, d_{22},-d_{11}+2d_{22}, 2d_{11}-2d_{22}, d_{11}-d_{22}\}$. According to Theorem \ref{lcs_contact_I} the derivation $D$ should have only imaginary eigenvalues. Then $d_{11}=d_{22}=0$ and $D$ is a nilpotent matrix. Therefore the Lie algebra $\g=\R\ltimes_D\n_1$ will be a nilpotent Lie algebra (see \cite{BM} for a classification of LCS nilpotent Lie algebras in dimension $6$). 

\subsubsection{Case $\n_2$}
Next we regard the nilpotent Lie algebra $\n_2$ with a general derivation $D$. If we request $D$ to have only imaginary eigenvalues, then $D$ will be a nilpotent matrix and therefore $\g=\R\ltimes_D\n_2$ will be a nilpotent Lie algebra.

\subsubsection{Case $\h_5$}
We consider now the Heisenberg Lie algebra $\h_5$ with the contact structure given by $\eta=e^1$. According to Theorem \ref{lcs_contact_I} we request that $\eta\circ D=0$, then we obtain that $d_{11}=d_{12}=d_{13}=d_{14}=d_{15}=0$ and therefore $D$ can be written as follow
\begin{equation}\label{Der_h5}
D=\left(\begin{array}{c|c}
0&\\
\hline
& \begin{array}{c|c}
   A&B\\
   \hline
   C&-A^t
  \end{array}
\end{array}\right)
\end{equation}
where $A,B,C\in\mathfrak{gl}(2,\R)$ with $B^t=B$ and $C^t=C$. Note that in this case we have many possibilities to take $D$ satisfying that its eigenvalues are imaginary and $D$ is non-nilpotent. For example $B=C=0$, $\tr A=0$ and $\det A>0$.

Now we exhibit a example of a $6$-dimensional Lie algebra of type I admitting a LCS structure starting with the contact Lie algebra $(\h_5,\eta=e^1)$. This example is interesting because later we will show that the simply connected Lie groups associated to some of these Lie algebras admit lattices as we will show in the next section. 

We define $\g=\R\ltimes_D\h_5$ where
\begin{equation}\label{latt1}
D=\begin{pmatrix}
0&0&0&0&0\\
0&0&0&-1&0\\
0&0&0&0&0\\
0&1&0&0&0\\
0&0&b&0&0
\end{pmatrix}
\end{equation}
in the basis $\{e_1,e_2,e_3,e_4,e_5\}$ of $\h_5$.
Note that $D$ is as in \eqref{Der_h5} with $A=0$,
$C=\begin{pmatrix}
1&0\\
0&b
\end{pmatrix}$, and 
$B=\begin{pmatrix}
-1&0\\
0&0
\end{pmatrix}$.
Then $D$ is a derivation of $\h_5$, and $\g$ is a Lie algebra with Lie bracket 
\[
[e_2,e_4]=e_1, \quad
[e_3,e_5]=e_1,\]
\[[e_0,e_2]=e_4, \quad
[e_0,e_3]=be_5, \quad
[e_0,e_4]=-e_2,\]
for $b \in \mathbb{R}$ in the basis $\{e_0,e_1,e_2,e_3,e_4,e_5\}$ of $\g=\R e_0\ltimes_D\h_5$. Since $D$ is non-nilpotent and the eigenvalues of $D$ are $\{0,i,-i\}$, then according to Theorem \ref{lcs_contact_I} $\g$ is a non-nilpotent Lie algebra of type I and  
$$\left\{\begin{array}{l}
\theta=e^0\\
\omega=d_{\theta}e^1=-e^{01}-e^{24}- e^{35}, 
\end{array}\right. $$
is a LCS structure of the first kind on $\g$ where $\{e^0,e^1,e^2,e^3,e^4,e^5\}$ denotes the dual basis of $\g^*$.  We denote this Lie algebra by $\g_{1,b}$. When $b=0$ the LCS structure on $\g_{1,0}$ arises from a Vaisman structure (see \cite{AO2}).
It can be proved that $\g_{1,b}$ is isomorphic to $\g_{1,1}$ for $b\neq 0$, and $\g_{1,1}$ is not isomorphic to $\g_{1,0}$.

\subsubsection{Case $\h$}
Finally we consider the Lie algebra $\h$ with the contact structure given by $\eta=e^1$. Assuming $\eta\circ D=0$ we reduce $D$ as follow 
\begin{equation}\label{Der_h}
D=\begin{pmatrix}
0&0&0&0&0\\
0&0&-d_{23}&0&0\\
0&d_{23}&0&0&0\\
0&0&0&0&d_{45}\\
0&0&0&0&0
\end{pmatrix}
\end{equation}
Note that if $d_{23}\neq 0$ we can assume $d_{23}=1$.

Now we construct a example of a $6$-dimensional Lie algebra of type I admitting a LCS structure starting with the contact Lie algebra $(\h,\eta)$ with $\eta=e^1$. 
In the next section we show that the simply connected Lie group associated to some of these Lie algebras admits lattices.

Let $\g$ be the Lie algebra given by $\g=\R\ltimes_D\h$ where $D$ is the following derivation of $\h$
\begin{equation}\label{latt2}
D=\begin{pmatrix}
	0&0&0&0&0\\
		0&0&-1&0&0\\
	0&1&0&0&0\\
		0&0&0&0&b\\
	0&0&0&0&0
\end{pmatrix},
\end{equation}
for some $b\in\R$.
Then the Lie bracket on $\g$ can be written in the basis $\{e_0,e_1,e_2,e_3,e_4,e_5\}$ as follows
	\[
	[e_2,e_3]=e_1, 
	[e_5,e_2]=-e_3,
	[e_5,e_3]=e_2,
	[e_5,e_4]=-e_1,\]
	\[[e_0,e_5]=be_4, 
	[e_0,e_2]=e_3,
	[e_0,e_3]=-e_2. \]
It is easy to verify that the eigenvalues of $D$ are $\{0,i,-i\}$. Since $\h$ is a contact Lie algebra of type I, it follows from Theorem \ref{lcs_contact_I} that $\g$ is a non-nilpotent Lie algebra of type I, and 
$$\left\{\begin{array}{l}
\theta=e^0\\
\omega=\omega=d_{e^0}(e^1)=-e^{01}-e^{23}- e^{45}, 
\end{array}\right. $$
is a LCS structure of the first kind on $\g$ where $\{e^0,e^1,e^2,e^3,e^4,e^5\}$ denotes the dual basis of $\g^*$. We denote this Lie algebra by $\g_{2,b}$. 
When $b=0$ the LCS structure on $\g_{2,0}$ arises from a Vaisman structure (see \cite{AO2}).
It can be proved that $\g_{2,b}$ is isomorphic to $\g_{2,1}$ for $b\neq 0$, and $\g_{2,1}$ is not isomorphic to $\g_{2,0}$.

\

\section{Solvmanifolds with LCS structures}

We now exhibit lattices in the simply connected Lie groups associated to the Lie algebras $\g_{1,b}$ and $\g_{2,b}$ constructed in the previous section.
 
\

We consider first the Lie algebra $\g_{1,b}=\R e_0\ltimes_D\h_5$ with $D$ given by \eqref{latt1}. We denote by $G_{1,b}$ the simply connected Lie groups associated to $\g_{1,b}$. Then $G_{1,b}=\R \ltimes_\varphi H_5$ is an almost nilpotent Lie group where 
$\varphi: H_5 \to H_5$ is
$$\varphi(t)=e^{tD}=\begin{pmatrix}
1 & 0 & 0 & 0 & 0 \\
0 & \cos t & 0 & -\sin t & 0 \\
0 & 0 & 1 & 0 & 0 \\
0 & \sin t & 0 & \cos t & 0 \\
0 & 0 & tb & 0 & 1 
\end{pmatrix},$$
and $H_5$ denotes the $5$-dimensional Heisenberg Lie group, i.e. the Euclidean manifold $\R^{5}$ equipped with the following product:
\[(z,x_1,y_1,x_2,y_2)\cdot (z',x'_1,y'_1,x'_2,y'_2)=(z+z'+\frac 12 (x_1y'_1-x'_1y_1+x_2y'_2-x'_2y_2),x_1+x'_1,y_2+y'_2).\]
 
We exhibit a lattice in $G_{1,b}$ for some values of $b$. Firstly, for any $k\in\N$ consider the lattice $\Gamma_k$ in $H_5$ given by $\Gamma_k= \frac{1}{2k}\Z\times\Z\times\Z\times\Z\times\Z$.
Let $\gamma=(\frac{m}{2k},p,q,r,s)\in\Gamma_k$, then $\varphi(t)(\gamma)\in\Gamma_k$ if and only if 
$$\left\{\begin{array}{l}
	p\cos t-r\sin t \in \Z\\
	p\sin t-r\cos t \in \Z\\
	qtb+s\in\Z. 
\end{array}\right. $$
Taking $t_0\in\{\frac{\pi }{2},\pi,2\pi\}$ and $b=\frac{1}{t_0}$ it is easy to see that $\Gamma_k$ is invariant under the subgroup generated by $\varphi(t_0)$. Therefore $\Lambda_{k,t_0}= t_0\Z\ltimes \Gamma_k$ is a lattice in $G_{1,\frac{1}{t_0}}$. Then $$\Lambda_{k,t_0}\backslash G_{1,\frac{1}{t_0}},$$ for any $k\in\Z$, are examples of $6$-dimensional solvmanifolds of type I admitting a LCS structure.
Note that for $t_0=2\pi$, then $\varphi(2\pi) = \I$, and $\Lambda_{k,2\pi}= 2\pi\Z\ltimes \Gamma_k$, which is isomorphic to a lattice in $\R \times H_{2n+1}$. Therefore, we have that the solvmanifold $\Lambda_{k,2\pi}\backslash G_{1,1/2\pi}$ is isomorphic to the nilmanifold $S^1 \times \Gamma_k\backslash H_{2n+1}$. For the other values of $t_0$ we obtain solvmanifolds which are covered for this nilmanifold.

\

We consider now, the Lie algebra $\g_{2,b}$. It is easy to see that this Lie algebra can be written as
\[\g = \R e_0 \ltimes_D (\R e_4\ltimes_\psi (\R e_5\ltimes_\rho \h_3)), \]
where \[D=\ad_{e_0}=
\begin{pmatrix}
0 & b & 0 & 0 & 0 \\
0 & 0 & 0 & 0 & 0 \\
0 & 0 & 0 & 0 & 0 \\
0 & 0 & 0 & 0 & -1 \\
0 & 0 & 0 & 1 & 0 
\end{pmatrix},  \quad \quad
U=\ad_{e_4}|_{\text{span}\{e_5,e_1,e_2,e_3\}}=
\begin{pmatrix}
 0 & 0 & 0 & 0 \\
 1 & 0 & 0 & 0 \\
 0 & 0 & 0 & 0 \\
 0 & 0 & 0 & 0 
\end{pmatrix},\]

\[
V=\ad_{e_5}|_{\text{span}\{e_1,e_2,e_3\}}=
\begin{pmatrix}
 0 & 0 & 0 \\
 0 & 0 & 1 \\
 0 & -1 & 0 
\end{pmatrix}.
\]

Then $G=\R e_0\ltimes_\phi \R e_4\ltimes_\psi \R e_5 \ltimes_\rho  H_3$ where $\phi(t)=e^{tD}$, $\psi(t)=e^{tU}$ and $\rho(t)=e^{tV}$. 
$H_3$ is $3$-dimensional Heisenberg Lie group, i.e. the Euclidean manifold $\R^3$ equipped with the product
\[(z,x,y)\cdot (z',x',y')=(z+z'+\frac 12 (xy'-x'y),x+x',y+y').\]

For any $k\in\N$ consider the lattice $\Gamma_k$ in $H_3$ given by $\Gamma_k= \frac{1}{2k}\Z\times\Z\times\Z$. Any lattice $\Gamma_k$ is invariant under the subgroup generated by $\rho(\frac{\pi}{2})$, then  
$L=\frac{\pi}{2}\Z\ltimes_\rho\Gamma_k$ is a lattice in $\R e_5 \ltimes_\rho  H_3$.

Now we see that $\psi(\frac{2}{\pi})$ preserves $L$, then we have that $\Lambda=\frac{2}{\pi}\Z\ltimes_\psi L$ is a lattice in $\R e_4\ltimes_\psi \R e_5 \ltimes_\rho  H_3$.

Finally we look for $t_0\neq0$ such that $\Lambda$ is preserved by the subgroup generated by $\phi(t_0)$. It can be shown that if $b=\frac{8}{\pi^3 }$, then $\Lambda$ is invariant under the subgroup generated by  $\phi(\frac{\pi}{2})$. Therefore for this value of $b$,
\[\Gamma=\frac{\pi}{2}\Z \ltimes_\phi\frac{2}{\pi}\Z\ltimes_\psi\frac{\pi}{2}\Z\ltimes_\rho\Gamma_k\]
is a lattice in $G$ for any $k\in\N$. Then $\Gamma\backslash G$ (for any $k\in\Z$) are examples of $6$-dimensional solvmanifolds of type I admitting a LCS structure.


\


\bibliographystyle{plain}

\end{document}